\newcommand{\reff}[1]{(\ref{#1})}
\theoremstyle{plain}
\newtheorem{theo}{Theorem}[section]
\newtheorem{theo*}{Theorem}
\newtheorem{prop}[theo]{Proposition}
\newtheorem{lem}[theo]{Lemma}
\theoremstyle{remark}
\newtheorem{rem}[theo]{Remark}
\newcommand{\cb}{{\mathcal B}}
\newcommand{\cc}{{\mathcal C}}
\newcommand{\cf}{{\mathcal F}}
\newcommand{\cg}{{\mathcal G}}
\newcommand{\ch}{{\mathcal H}}
\newcommand{\ci}{{\mathcal I}}
\newcommand{\ck}{{\mathcal K}}
\newcommand{\cp}{{\mathcal P}}
\newcommand{\cu}{{\mathcal U}}
\newcommand{\E}{{\mathbb E}}
\newcommand{\N}{{\mathbb N}}
\renewcommand{\P}{{\mathbb P}}
\newcommand{\R}{{\mathbb R}}
\newcommand{\Z}{{\mathbb Z}}
\newcommand{\rP}{{\rm P}}
\newcommand{\rE}{{\rm E}}
\newcommand{\ind}{{\bf 1}}
\newcommand{\argmin}{{\rm argmin}\;}
\newcommand{\argmax}{{\rm argmax}\;}
\newcommand{\norm}[1]{\mathop{\parallel\! #1 \! \parallel}\nolimits}
\newcommand{\val}[1]{\mathop{\left| #1 \right|}\nolimits}
\newcommand{\inv}[1]{\mathop{\frac{1}{ #1}}\nolimits}
\newcommand{\expp}[1]{\mathop {\mathrm{e}^{ #1}}}
\newcommand{\Var}{{\rm Var}\;}
\newcommand{\kl}[2]{\mathop{D\left(#1 \| #2 \right)}}
\newcommand{\sca}[2]{\mathop{\left\langle #1 , #2 \right\rangle}}
\newcommand{\pen}{{\rm pen}}
\newcommand{\var}{{\rm Var}\;}
\newcommand{\Cov}{{\rm Cov}\;}
\newcommand{\tr}[1]{\mathop{{\rm tr}\left( #1 \right)}\nolimits}
\renewcommand{\phi}{\varphi}
\renewcommand{\epsilon}{\varepsilon}
\newcommand{\fD}{\hat f_*^D}
\newcommand{\tD}{\hat t_*^D}
\newcommand{\yD}{\hat \psi^D_*}
\newcommand{\fS}{\hat f_*^S}
\newcommand{\gS}{\hat g_*^S}
\title{Optimal exponential bounds for aggregation of estimators for the Kullback-Leibler loss}
\date{\today}
 \author{Cristina Butucea}
 \address{
 Cristina Butucea,
 Universit\'{e} Paris-Est, LAMA (UPE-MLV), 77455 Marne La Vall\'{e}e, France.}
 \email{cristina.butucea@univ-mlv.fr}
 \author{Jean-Fran\c{c}ois Delmas}
 \address{
 Jean-Fran\c{c}ois Delmas,
 Universit\'{e} Paris-Est, CERMICS (ENPC), 77455 Marne La Vall\'{e}e, France.}
 \email{delmas@cermics.enpc.fr}
 \author{Anne Dutfoy}
 \address{
 Anne Dutfoy, 
 EDF Research \& Development, Industrial Risk Management Department, 92141 Clamart Cedex, France.}
 \email{anne.dutfoy@edf.fr}
 \author{Richard Fischer}
 \address{
 Richard Fischer, 
 Universit\'{e} Paris-Est, CERMICS (ENPC), 77455 Marne La Vall\'{e}e, France\\
 EDF Research \& Development, Industrial Risk Management Department, 92141 Clamart Cedex, France.}
 \email{fischerr@cermics.enpc.fr}
\begin{document}

\thanks{This work is partially supported by the French ``Agence Nationale de
 la Recherche'',CIFRE n$^{\circ}$ 1531/2012, and by EDF Research \& Development, Industrial Risk Management Department}

\keywords{aggregation, Kullback-Leibler divergence, probability density estimation, sharp oracle inequality, spectral density estimation}

\subjclass[2010]{62G07, 62G05, 62M15 }

 \begin{abstract} 
 We study the problem of model selection type aggregation with respect to the Kullback-Leibler divergence for various probabilistic models. Rather than considering a convex combination of the initial estimators $f_1, \hdots, f_N$, our aggregation procedures rely on the convex combination of the logarithms of these functions. The first method is designed for probability density estimation as it gives an aggregate estimator that is also a proper density function, whereas the second method concerns spectral density estimation and has no such mass-conserving feature. We select the aggregation weights based on a penalized maximum likelihood criterion.  We give sharp oracle inequalities that hold with high probability, with a remainder term that is decomposed into a bias and a variance part. We also show the optimality of the remainder terms by providing the corresponding lower bound results.   
 
\end{abstract} 
 
\maketitle

 \section{Introduction} \label{sec:intro}

   The pure aggregation framework with deterministic estimators was first established in \cite{nemirovski2000topics} for nonparametric regression with random design.  Given $N$ estimators $f_k, 1 \leq k \leq N$ and a sample $X=(X_1, \hdots, X_n)$ from  the model $f$, the problem is to find an aggregated estimate $\hat{f}$ which performs nearly as well as the best  $f_\mu$, $\mu \in \cu$, where:
   \[
      f_\mu = \sum_{k=1}^N \mu_k f_k,
   \]
   and $\cu$ is a certain subset of $\R^N$ (we assume that linear combinations of the estimators are valid candidates).  The performance of the estimator is measured by a loss function $L$. Common loss functions include $L^p$ distance (with $p=2$ in most cases), Kullback-Leibler or other divergences, Hellinger distance, etc. The aggregation problem can be formulated as follows: find an aggregate estimator $\hat{f}$ such that for some $C \geq 1$ constant, $\hat{f}$ satisfies an oracle inequality in expectation, i.e.:
   \begin{equation} \label{eq:oracle_risk}
          \mathbb{E} \left[ L(f,\hat{f}) \right] \leq C \min_{\mu \in \cu} L(f,f_\mu) +  R_{n,N},
   \end{equation}
   or in deviation, i.e. for $\varepsilon > 0$ we have with probability greater than $1-\varepsilon$:
   \begin{equation} \label{eq:oracle_deviation}
           L(f,\hat{f}) \leq C \min_{\mu \in \cu} L(f,f_\mu) +  R_{n,N,\varepsilon},
   \end{equation}
   with remainder terms $R_{n,N}$ and $R_{n,N,\varepsilon}$ which do not depend on $f$ or $f_k, 1 \leq k \leq N$. If $C=1$, then the oracle inequality is sharp. 
   
   Three types of problems were identified depending on the choice of
   $\cu$.  In the model selection problem, the estimator mimics the best
   estimator amongst $f_1, \hdots, f_N$, that is $\cu= \{ e_k, 1 \leq k
   \leq N\}$, with $e_k = (\mu_j, 1 \leq j \leq N) \in \R^N$ the unit
   vector in direction $k$ given by $\mu_j=\ind_{\{j=k\}}$. In the
   convex aggregation problem, $f_\mu$ are the convex combinations of
   $f_k, 1 \leq k \leq N$, i.e. $\cu = \Lambda^+
   \subset \R^N$ with:
\begin{equation}
      \label{eq:def-lambda+}
          \Lambda^+ =\{ \mu=(\mu_k, 1 \leq k \leq N) \in \R^N, \mu_k
          \geq 0 \text{ and } \sum_{1 \leq k \leq N} \mu_k=1\}. 
\end{equation}  
   Finally in the linear aggregation problem we take $\cu = \R^N$,  the entire linear span of the initial estimators.    
   
    Early papers usually consider the $L^2$ loss in expectation as in \reff{eq:oracle_risk}. For the regression model with random design, optimal bounds for the  $L^2$ loss in expectation for model selection aggregation was considered in \cite{yang2000combining} and \cite{wegkamp2003model}, for convex aggregation in \cite{juditsky2000functional} with improved results for large $N$ in \cite{yang2004aggregating}, and for linear aggregation in \cite{tsybakov2003optimal}.  These results were extended to the case of regression with fixed design for the model selection aggregation in \cite{dalalyan2007aggregation} and \cite{dalalyan2008aggregation},  and for affine estimators in the convex aggregation problem in \cite{dalalyan2012sharp}. A unified aggregation procedure which achieves near optimal loss for all three problems simultaneously was proposed in \cite{bunea2007aggregation}. 
   
   For density estimation, early results include \cite{catoni1999universal} and \cite{yang2000mixing} which independently considered the model selection aggregation under the  Kullback-Leibler loss in expectaion. They introduced the progressive mixture method to give a series of estimators which verify oracle inequalities with optimal remainder terms. This method was later generalized as the mirror averaging algorithm in \cite{juditsky2008learning} and applied to various problems. Corresponding lower bounds which ensure the optimality of this procedure was shown in \cite{lecue2006lower}. The convex and linear aggregation problems for densities under the $L^2$ loss in expectation were considered in \cite{rigollet2007linear}.  
   
   While a lot of papers considered the expected value of the loss, relatively few papers address the question of optimality in deviation, that is with high probability as in \reff{eq:oracle_deviation}. For the regression problem with random design, \cite{audibert2008progressive} shows that the progressive mixture method is deviation sub-optimal for the model selection aggregation problem, and proposes a new algorithm which is optimal for the $L^2$ loss in deviation and expectation as well. Another deviation optimal method based on sample splitting and empirical risk minimization on a restricted domain was proposed in \cite{lecue2009aggregation}. For the fixed design regression setting, \cite{rigollet2012kullback} considers all three aggregation problems in the context of generalized linear models and gives  constrained likelihood maximization methods which are optimal in both expectation and deviation with respect to the Kullback-Leibler loss. More recently, \cite{dai2012deviation} extends the results of  
   \cite{rigollet2012kullback} for model selection by introducing the $Q$-aggregation method and giving a greedy algorithm which produces a sparse aggregate achieving the optimal rate in deviation for the $L^2$ loss. More general properties of this method applied to other aggregation problems as well are discussed in \cite{dai2014aggregation}. 
   
   For the density estimation, optimal bounds in deviation with respect to the $L^2$ loss for model selection aggregation are given in \cite{bellec2014optimal}. The author gives a non-asymptotic sharp oracle inequality under the assumption that $f$ and the estimators $f_k , 1 \leq k \leq N$ are bounded, and shows the optimality of the remainder term by providing the corresponding lower bounds as well. The penalized empirical risk minimization procedure introduced in \cite{bellec2014optimal} inspired our current work. Here, we consider a more general framework which incorporates, as a special case, the density estimation problem. Moreover, we give results in deviation for the Kullback-Leibler loss instead of the $L^2$ loss considered in \cite{bellec2014optimal}.
   
   Linear aggregation of lag window spectral density estimators with  $L^2$ loss  was studied in \cite{chang2014aggregation}. The method we propose is more general as it can be applied to any set of estimators $f_k$, $1 \leq k \leq N$, not only kernel estimators. However we consider the model selection problem, which is  weaker than the linear aggregation problem. Also, this paper concerns optimal bounds in deviation for the Kullback-Leibler loss instead of the $L^2$ loss in expectaion.

   We now present our main contributions. We propose aggregation schemes for the  estimation of probability densities on $\R^d$ and the estimation of spectral densities of stationary Gaussian processes.
    We consider model selection type aggregation for the Kullback-Leibler loss in deviation. For positive, integrable functions $p,q$, let $\kl{p}{q}$ denote the generalized Kullback-Leibler divergence given by: 
\begin{equation}
   \label{eq:KL}
      \kl{p}{q} = \int p \log(p/q) - \int p + \int q.
\end{equation}
   This is a Bregman-divergence, therefore $\kl{p}{q}$ is non-negative and $\kl{p}{q} =0$ if and only if a.e. $p=q$.  The Kullback-Leibler loss of an estimator $\hat{f}$ is given by $D\,(f||\hat{f})$. For initial estimators $f_k, 1 \leq k \leq N$, the aggregate estimator $\hat{f}$  verifies the following sharp oracle inequality for every $f$ belonging to a large class of functions $\cf$, with probability greater than $1-\exp(-x)$ for all $x > 0$:
   \begin{equation} \label{eq:oracle}
      \kl{f}{\hat{f}} \leq  \min_{1\leq k \leq N} \kl{f}{f_k} +  R_{n,N,x}.
   \end{equation}
    We propose two methods of convex aggregation for non-negative estimators, see Propositions \ref{prop:aggreg_gen_D} and \ref{prop:aggreg_gen_D}. Contrary to the usual approach of giving an aggregate estimator which is a linear or convex combination of the initial estimators, we consider an aggregation based on a convex combination of the logarithms of these estimators. The {\it convex aggregate estimators} $\hat{f} = f^D_{\hat{\lambda}}$ and $\hat{f} = f^S_{\hat{\lambda}}$  with $\hat{\lambda} = \hat{\lambda}(X_1, \hdots, X_n) \in \Lambda^+$  maximizes a penalized maximum likelihood criterion. The exact form of the convex aggregates $f^D_{\hat{\lambda}}$ and $f^S_{\hat{\lambda}}$ will be precised in later sections for each setup.  
   
   The first method concerns estimators with a given total mass and produces an aggregate $f^D_{\hat{\lambda}}$ which has also the same total mass. This method is particularly adapted for density estimation as it provides an aggregate which is also proper density function. We use this method to propose an adaptive nonparametric density estimator for maximum entropy distributions of order statistics in \cite{butucea2016nonparametric}.   
   The second method, giving the aggregate $f^S_{\hat{\lambda}}$, does not have the mass conserving feature, but can be applied to a wider range of statistical estimation problems, in particular to spectral density estimation. We show that both procedures give an aggregate which verifies a sharp oracle inequality with a bias and a variance term.  When applied to density estimation, we obtain sharp oracle inequalities with the optimal remainder term of order $\log(N)/n$, that is we have \reff{eq:oracle} with:
   \[
       R_{n,N,x}= \beta\frac{\log(N)+x}{n},
   \]
    with $\beta$ depending only on the infinity norm of the logarithms of $f$ and $f_k, 1 \leq k \leq N$, see Theorem \ref{theo:aggreg_density}. In the case of spectral density estimation, we need to suppose a minimum of regularity for the logarithm of the true spectral density and the estimators. We require that the logarithms of the functions belong to the periodic Sobolev space $W_{r}$ with $r > 1/2$. We show that this also implies that the spectral densities itself belong to $W_r$. We obtain \reff{eq:oracle} with:
    \[
       R_{n,N,x}= \beta\frac{\log(N)+x}{n}+\frac{\alpha}{n},
    \]
    where $\beta$ and $\alpha$ constants which depend only on the regularity and the Sobolev norm of the logarithms of $f$ and $f_k, 1 \leq k \leq N$, see Theorem \ref{theo:aggreg_spectral}.
    
    To show the optimality in deviation of the aggregation procedures, we give the corresponding tight lower bounds as well, with the same remainder terms, see Propositions \ref{prop:aggreg_low} and \ref{prop:spectr_low}. This complements the results of \cite{lecue2006lower} and \cite{bellec2014optimal} obtained for the density estimation problem. In \cite{lecue2006lower}  the lower bound for the expected value of the Kullback-Leibler loss was shown with the same order for the remainder term, while in \cite{bellec2014optimal} similar results were obtained in deviation for the $L^2$ loss. 
    
    The rest of the paper is organised as follows. In Section \ref{sec:not} we introduce the notation and give the basic definitions used in the rest of the paper. We present the two types of convex aggregation method for the logarithms in Sections \ref{sec:aggr_method_D} and \ref{sec:aggr_method_S}. For the model selection aggregation problem, we give a general sharp oracle inequality in deviation for the Kullback-Leibler loss for each method. In Section \ref{sec:app} we apply the methods for the probability density and the spectral density estimation problems. The results on the corresponding lower bounds can be found in Section \ref{sec:lower} for both problems.  We summarize the properties of Toeplitz matrices and periodic Sobolev spaces in the Appendix.

\section{Notations} \label{sec:not}

Let $\cb_+(\R^d)$, $d\geq 1$, be the set of non-negative measurable real
function  defined  on  $\R^d$ and   $h\in  \cb_+(\R^d)$  be  a  reference
probability density. For $f\in \cb_+(\R^d)$, we define:
\begin{equation}
   \label{eq:def-g}
g_f=\log(f/h),
\end{equation}
with the  convention that $\log(0/0)=0$. Notice  that we have $\norm{g_f}_\infty
<\infty $  if and only if   $f$ and $h$ have the same  support
$\ch=\{h>0\}$.  We  consider the subset $\cg$ of the
set of non-negative measurable functions with support $\ch=\{h>0\}$:
\[
      \cg= \{ f\in \cb_+(\R ^d);  \, \norm{g_f}_\infty < +\infty \}.
\]
For $f\in
\cg$, we set:
\begin{equation}
   \label{eq:def-m}
m_f=\int f, \quad \psi_f=-\int g_f \,h\quad \text{and} \quad
t_f=g_f+\psi_f,
\end{equation}
and   we   get  $\int t_f\,  h=0$ as well as the inequalities:
\begin{equation}
   \label{eq:majo-mty}
m_f\leq  \expp{\norm{g_f}_\infty}, \quad
|\psi_f|\leq \norm{g_f}_\infty, \quad
\norm{t_f}_\infty\leq 2\norm{g_f}_\infty\quad\text{and}\quad
\psi_f +\log(m_f)\leq  \norm{t_f}_\infty .
\end{equation}
Notice  that the  Kullback-Leibler divergence  $ \kl{f'}{f}$,  defined in
\reff{eq:KL}, is  finite for  any function  $f',f\in \cg$.   When
there  is no  confusion, we  shall write  $g$, $m$,  $\psi$ and  $t$ for
$g_f$, $m_f$, $\psi_f$ and $t_f$.

We consider  a probabilistic model $\cp=\{\rP_f;\,  f\in \cf(L)\}$, with
$\cf(L)$  a  subset  of  $\cg$  with  additional  constraints  (such  as
smoothness or integral condition) and $\rP_f$ a probability distribution
depending on  $f$. In  the sequel,  the model  $\rP_f$ corresponds  to a
sample   of  i.i.d.   random   variables  with   density  $f$   (Section
\ref{sec:aggr_method_D}) or a sample  from a stationary Gaussian process
with spectral density $f$  (Section \ref{sec:aggr_method_S}). Suppose we
have $(f_k, 1 \leq k \leq N)$,  which are $N$ distinct estimators of the
function $f\in \cf(L)$ such that  there exists $K>0$ (possibly different
from $L$) for which $f_k\in  \cf(K)$ for $1\leq k\leq N$, as  well as a sample
$X=(X_1,  \hdots, X_n)$,  $n \in  \N^*$ with  distribution $\rP_f$.   We
shall propose  two convex aggregation  estimator of $f$, based  on these
estimators   and  the   available  sample,   that  behaves,   with  high
probability, as well as the best initial estimator $f_{k^*}$ in terms of
the Kullback-Leibler divergence, where $k^*$ is defined as:
\begin{equation} \label{eq:def_k*_gen}
       k^* = \mathop{\argmin}_{1 \leq k \leq N} \kl{f}{f_{k}}.
\end{equation}

For $1 \leq k \leq N$,  we set  $g_k=g_{f_k}$, $m_k=m_{f_k}$, $\psi_k=\psi_{f_k}$ and
$t_k=t_{f_k}$. Notice that:
\begin{equation} \label{eq:exp_f}
      f=\exp(g ) \,h=\exp(t-\psi ) \,h  \quad \text{and}\quad
      f_k=\exp(g_k )\, h=\exp(t_k-\psi_k ) \,h. 
\end{equation}

We denote  by $I_n$  an integrable estimator  of the  function $f$  measurable with
respect to the sample $X=(X_1, \hdots, X_n)$. The estimator $I_n$ may be
a biased estimator of $f$. We note $\bar{f}_n$ the expected value of $I_n$:
     \[
       \bar{f}_n = \E[ I_n].
     \]

We   fix   some   additional   notation.   For   a  measurable  function
$p $ on $\R ^d$ and a measure $Q$ on  $\R^d$ (resp. a measurable
function $q$ on $\R^d$), we write  $\langle p , Q
\rangle=  \int p(x)Q(dx)$ (resp. $\langle p ,q
\rangle=  \int pq$)  when the integral is well defined. 
We shall consider the $L^2(h)$  norm given by $\norm{p}_{L^2(h)}  =
\left(\int p^2 h\right)^{1/2}$.
      
\section{Convex aggregation for the Kullback-Leibler divergence}  
\label{sec:aggr_method}
     
     In this section, we propose two convex aggregation methods, suited for models submitted to different type of constraints. First, we state non-asymptotic oracle inequalities for the Kullback-Leibler divergence in general form. Then, we derive more explicit non-asymptotic bounds for two applications: the probability density model and the spectral density of stationary Gaussian processes, respectively.

   \subsection{Aggregation procedures} 
   
   In this section, we describe the two aggregation methods of $f$ using
   the estimators $(f_k,  1\leq k\leq N)$.  The first one  is the convex
   aggregation of  the centered logarithm  $(t_k, 1\leq k\leq  N)$ which
   provides an aggregate estimator  $f^D_\lambda$.  This is particularly
   useful when considering density estimation, as the final estimator is
   also a density function.  The second one is the convex aggregation of
   the  logarithm $(g_k,  1\leq k\leq  N)$ which  provides an  aggregate
   estimator  $f^S_\lambda  $.  This  method is  suitable  for  spectral
   density estimation and it can be used for density estimation as well.

   \subsubsection{Density functions}
\label{sec:aggr_method_D}

In this Section, we shall  consider probability density function, but what follows can readily be
adapted to functions  with any given  total mass. Notice that if  $f\in \cg$
is a density, then  we get $\kl{h}{f}=\psi_f$,
which in turn 
implies that $\psi_f\geq 0$ that is, using also the last inequality of
\reff{eq:majo-mty}: 
\begin{equation}
   \label{eq:psi>0}
0\leq \psi_f\leq \norm{t_f}_\infty. 
\end{equation} 

We  want  to estimate  a  density  function $f  \in  \cg$  based on  the
estimators $f_k  \in \cg$ for $1  \leq k \leq  N$ which we assume  to be
probability density functions.  Recall the representation \reff{eq:exp_f} of $f$ and
$f_k$ with  $t=t_f$ and $t_k=t_{f_k}$.   For $\lambda \in \Lambda^+$  defined by
\reff{eq:def-lambda+}, we consider the aggregate estimator $f^D_\lambda$
given by the convex combination of $( t_k, 1 \leq k \leq N)$:
\[
      f^D_\lambda = \exp\left( t_\lambda-\psi_\lambda\right)\, h
      \quad\text{with}\quad 
      t_\lambda =\sum_{k=1}^N \lambda_k t_k
      \quad\text{and}\quad \psi_\lambda=\log\left(\int
        \expp{t_\lambda}\,  h \right). 
\]
Notice   that  $f^D_\lambda   $   is  a   density   function  and   that
$\norm{t_\lambda}_\infty \leq \max_{1\leq  k\leq N}
\norm{t_k}_\infty<+\infty  $, that is $f^D_\lambda\in \cg$.
The Kullback-Leibler divergence for the  estimator $f^D_\lambda$ of $f$ is
given by:
\begin{equation}
   \label{eq:KL-ffD}
     \kl{f}{f^D_\lambda}=\int f \log\left(f/f^D_\lambda\right) =
     \sca{t-t_\lambda}{f} + (\psi_\lambda-\psi). 
\end{equation}
Minimizing  the   Kullback-Leibler  distance   is  thus   equivalent  to
maximizing $ \lambda \mapsto \sca{t_\lambda}{f} - \psi_\lambda$.  Notice
that  $\sca{t_\lambda}{f}$  is  linear  in $\lambda$  and  the  function
$\lambda \mapsto  \psi_\lambda$ is convex since  $\nabla^2 \psi_\lambda$
is the covariance matrix of the random vector $(t_k (Y_\lambda), 1\leq k
\leq   N)$  with   $Y_\lambda$  having   probability  density   function
$f^D_\lambda$. As $I_n$ is a non-negative  estimator of $f$ based on the
sample  $X=(X_1,   \ldots,  X_n)$,   we  estimate  the   scalar  product
$\sca{t_\lambda}{f}$   by  $\sca{t_\lambda}{I_{n}}$.    To  select   the
aggregation weights $\lambda$, we  consider on $\Lambda^+$ the penalized
empirical criterion $H^D_n(\lambda)$ given by:
\begin{equation}\label{eq:def_Hn}
     H^D_n(\lambda) = \sca{t_\lambda}{I_{n}}  -\psi_\lambda - \inv{2}
     \pen^D(\lambda),  
\end{equation}
with penalty term:
\[
      \pen^D(\lambda) = \sum_{k=1}^N \lambda_k \kl{f^D_\lambda}{f_k} =
      \sum_{k=1}^N \lambda_k \psi_k - \psi_\lambda.  
\]

\begin{rem}\label{rem:1/2}
  The penalty term in \reff{eq:def_Hn} can be multiplied by any constant
  $\theta \in (0,1)$  instead of $1/2$.  The choice of  $1/2$ is optimal
  in  the sense  that  it  ensures that  the  constant $\exp(-6K)/4$  in
  \reff{eq:def_V_D}  of  Proposition  \ref{prop:aggreg_gen_D}  is  maximal,
  giving the sharpest result.
\end{rem}

The   penalty   term   is   always   non-negative   and   finite.    Let
$L^D_n(\lambda)= \sca{t_\lambda}{I_{n}} - \inv{2} \sum_{k=1}^N \lambda_k
\psi_k$. Notice that  $L^D_n(\lambda)$ is linear in  $\lambda$, and that
$H^D_n$ simplifies to:
\begin{equation} \label{eq:pen_alt_D}
        H^D_n(\lambda)= L^D_n(\lambda) - \inv{2} \psi_\lambda. 
\end{equation}

Lemma  \ref{lem:strong_conc_D} below  asserts  that the  function
$H^D_{n}$, defined  by \reff{eq:def_Hn},  admits a  unique  maximizer  on $\Lambda^+$  and  that it  is
strictly concave around this maximizer.
     
\begin{lem}\label{lem:strong_conc_D}
  Let $f$ and  $(f_k, 1\leq k\leq N)$ be density  functions, elements of
  $\cg$ such that $(t_k, 1\leq k\leq N)$ are linearly independent.  Then
  there exists a unique $\hat{\lambda}^D_* \in \Lambda^+$ such that:
\begin{equation} \label{eq:aggr_lambda_D}
        \hat{\lambda}^D_* = \mathop{\argmax}_{\lambda \in \Lambda^+}
        H^D_n(\lambda).  
\end{equation}
Furthermore, for all $\lambda \in \Lambda^+$,
we have: 
\begin{equation} \label{eq:strong_conc_D}
       H^D_{n}(\hat{\lambda}^D_*)-H^D_{n}(\lambda) \geq \frac{1}{2} \kl{f^D_{\hat{\lambda}^D_*}}{f^D_{\lambda}}.
\end{equation}
\end{lem}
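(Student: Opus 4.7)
The plan is to exploit the exponential-family structure of $\{f^D_\lambda : \lambda \in \Lambda^+\}$. By \reff{eq:pen_alt_D}, $H^D_n(\lambda) = L^D_n(\lambda) - \tfrac{1}{2}\psi_\lambda$ with $L^D_n$ affine, so both the concavity of $H^D_n$ and its Taylor expansion are governed entirely by $\psi_\lambda$, whose derivatives in $\lambda$ are classical moments of the sufficient statistic $(t_k)_{1\leq k\leq N}$ under $f^D_\lambda$.

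I would first establish existence and uniqueness of $\hat{\lambda}^D_*$. Continuity of $H^D_n$ on the compact simplex $\Lambda^+$ gives existence, and uniqueness will follow from strict concavity. The Hessian $\nabla^2 \psi_\lambda$ equals the covariance matrix of $(t_k(Y_\lambda))_{1\leq k\leq N}$ under $f^D_\lambda$, as recorded just before the statement of the lemma. I claim this matrix is positive definite: if $\sum_k c_k t_k$ were $f^D_\lambda$-a.s.\ equal to some constant $\alpha$, then since $f^D_\lambda$ and $h$ share the support $\ch$, the same identity would hold $h$-a.e.; integrating against $h$ and using $\int t_k\,h = 0$ forces $\alpha = 0$. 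Hence $\sum_k c_k t_k = 0$ a.e., and the linear-independence hypothesis on $(t_k)$ gives $c = 0$, so $\psi_\lambda$ is strictly convex and $H^D_n$ strictly concave.

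To prove \reff{eq:strong_conc_D}, I would fix $\lambda \in \Lambda^+$, parameterize the segment $\lambda_s = \hat{\lambda}^D_* + s(\lambda - \hat{\lambda}^D_*)$ for $s \in [0,1]$, and set $\phi(s) = H^D_n(\lambda_s)$. Affinity of $L^D_n$ localizes all second-order behaviour in $-\tfrac{1}{2}\psi$, so
\[
\phi''(s) = -\tfrac{1}{2}\,\mathrm{Var}_{f^D_{\lambda_s}}\!\bigl(t_\lambda - t_{\hat{\lambda}^D_*}\bigr).
\]
Since $\hat{\lambda}^D_*$ maximizes $H^D_n$ on the convex set $\Lambda^+$, the one-sided first-order condition at $s=0^+$ gives $\phi'(0) \leq 0$. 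Taylor's formula with integral remainder then yields
\[
H^D_n(\hat{\lambda}^D_*) - H^D_n(\lambda) = -\phi'(0) - \int_0^1 (1-s)\phi''(s)\,ds \geq \tfrac{1}{2}\int_0^1 (1-s)\,\mathrm{Var}_{f^D_{\lambda_s}}\!\bigl(t_\lambda - t_{\hat{\lambda}^D_*}\bigr)\,ds.
\]

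To conclude I would identify this integral with $\tfrac{1}{2}\kl{f^D_{\hat{\lambda}^D_*}}{f^D_\lambda}$. Starting from \reff{eq:KL-ffD}, together with the fact that both $f^D_\lambda$ and $f^D_{\hat{\lambda}^D_*}$ are probability densities and that $\partial_{\lambda_k}\psi_\lambda = \int t_k\,f^D_\lambda$, a direct computation recognizes $\kl{f^D_{\hat{\lambda}^D_*}}{f^D_\lambda}$ as the Bregman divergence of the convex function $\mu\mapsto \psi_\mu$ between $\hat{\lambda}^D_*$ and $\lambda$; a second application of Taylor's formula with integral remainder, this time to $\psi$ along the same segment $(\lambda_s)$, represents this divergence as exactly $\int_0^1 (1-s)\,\mathrm{Var}_{f^D_{\lambda_s}}(t_\lambda - t_{\hat{\lambda}^D_*})\,ds$, matching the lower bound above. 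The main subtlety I anticipate is the careful verification of the one-sided optimality condition $\phi'(0) \leq 0$ when $\hat{\lambda}^D_*$ lies on the relative boundary of $\Lambda^+$; once this is in place together with the positive-definiteness argument for the Hessian, the rest is a clean Taylor computation on the segment.
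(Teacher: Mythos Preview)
Your proposal is correct; it rests on the same two ingredients as the paper---the Bregman structure linking $\psi_\lambda$ to $D(f^D_{\lambda'}\|f^D_\lambda)$ and the first-order optimality condition $(\lambda-\hat\lambda^D_*)\cdot\nabla H^D_n(\hat\lambda^D_*)\le 0$---but the executions differ in two places. First, you pass through second-order Taylor with integral remainder twice (once for $H^D_n$, once for $\psi$) and observe that the two variance integrals coincide; the paper instead writes down the exact first-order identity
\[
\psi_\lambda-\psi_{\lambda'}=(\lambda-\lambda')\cdot\nabla\psi_{\lambda'}+\kl{f^D_{\lambda'}}{f^D_\lambda}
\]
(their \reff{eq:psi_lambda}), which immediately yields $H^D_n(\lambda)-H^D_n(\hat\lambda^D_*)=(\lambda-\hat\lambda^D_*)\cdot\nabla H^D_n(\hat\lambda^D_*)-\tfrac12\kl{f^D_{\hat\lambda^D_*}}{f^D_\lambda}$ and hence \reff{eq:strong_conc_D} with no integrals or Hessian. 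Second, you establish uniqueness \emph{before} the inequality, via strict concavity of $H^D_n$ (positive-definite covariance from linear independence of the $t_k$), whereas the paper deduces uniqueness \emph{from} the inequality itself: any two maximizers $\hat\lambda^1_*,\hat\lambda^2_*$ give $0\ge\tfrac12\kl{f^D_{\hat\lambda^1_*}}{f^D_{\hat\lambda^2_*}}$, so $f^D_{\hat\lambda^1_*}=f^D_{\hat\lambda^2_*}$ a.e., and linear independence of the $t_k$ forces $\hat\lambda^1_*=\hat\lambda^2_*$. Your route is entirely valid but heavier; the paper's exploitation of the exact Bregman identity bypasses the Hessian and the Taylor remainders altogether.
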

    
\begin{proof}
  Consider  the form  \reff{eq:pen_alt_D}  of $H^D_n(\lambda)$.  Recall
  that the  function $\lambda  \mapsto L^D_n(\lambda)$ is  linear in
  $\lambda$  and  that  $\lambda \mapsto  \psi_\lambda$  is  convex.
  Notice that  $\nabla \psi_{\lambda}  =(\sca{t_k}{f^D_\lambda}, 1  \leq k
  \leq N)$. This implies that for all $\lambda,\lambda' \in \Lambda^+$:
\begin{align} \label{eq:psi_lambda}
\nonumber (\lambda-\lambda') \cdot \nabla \psi_{\lambda'} +
\kl{f^D_{\lambda'}}{f^D_{\lambda}} 
& = \sum_{k=1}^N
(\lambda_k-\lambda_k') \sca{ t_{k} }{f^D_{\lambda'}}  
 +\sca{t_{\lambda'}-t_{\lambda}}{f^D_{\lambda'}} +
 \psi_{\lambda}-\psi_{\lambda'} \\ 
& =  \psi_{\lambda}-\psi_{\lambda'}.
\end{align}
Since  $\psi_\lambda$  is  convex  and differentiable,  we  deduce  from
\reff{eq:pen_alt_D} that  $H^D_{n}$ is  concave and  differentiable.  We
also have by the linearity  of $L^D_n$ and \reff{eq:psi_lambda} that for
all $\lambda,\lambda' \in \Lambda^+$:
\begin{equation} \label{eq:Hlam-Hlam'_D}
        H^D_{n}(\lambda)-H^D_{n}(\lambda') = (\lambda-\lambda') \cdot
        \nabla H^D_{n}(\lambda') -  \inv{2}
        \kl{f^D_{\lambda'}}{f^D_\lambda}. 
\end{equation}
The concave  function $H^D_n$ on  a compact  set attains its  maximum at
some  points $\Lambda^*  \subset  \Lambda^+$.  For $\hat{\lambda}_*  \in
\Lambda^*$, we have for all $\lambda \in \Lambda^+$:
\begin{equation} \label{eq:H_n_max_D}
        (\lambda-\hat{\lambda}_*) \cdot \nabla H^D_{n}(\hat{\lambda}_*) \leq 0,
\end{equation}
see  for   example  Equation   4.21  of   \cite{boyd2004convex}.   Using
\reff{eq:Hlam-Hlam'_D}      with     $\lambda'=\hat{\lambda}_*$      and
\reff{eq:H_n_max_D},    we    get   \reff{eq:strong_conc_D}    .     Let
$\hat{\lambda}^1_*$   and  $   \hat{\lambda}^2_*$  be   elements  of   $
\Lambda^*$. Then by \reff{eq:strong_conc_D}, we have:
\[
      0=H^D_{n}(\hat{\lambda}^1_*)-H^D_{n}(\hat{\lambda}^2_*) \geq
      \frac{1}{2} \kl{f^D_{\hat{\lambda}^1_*}}{f^D_{\hat{\lambda}^2_*}}, 
\]
which       implies       that      a.e.       $f^D_{\hat{\lambda}^1_*}=
f^D_{\hat{\lambda}^2_*}$.  By the  linear independence  of $(t_k,  1\leq
k\leq N)$,  this gives  $\hat{\lambda}^1_* =  \hat{\lambda}^2_*$, giving
the uniqueness of the maximizer.
\end{proof}
    
Using   $\hat{\lambda}^D_*$ defined  in
\reff{eq:aggr_lambda_D}, we set:
\begin{equation}
   \label{eq:def-f*D}
\fD=f^D_{\hat{\lambda}^D_*}, \quad 
\tD=t_{\hat{\lambda}^D_*}\quad\text{and}\quad 
\yD=\psi_{\hat{\lambda}^D_*}. 
\end{equation}
We show that the convex aggregate estimator
$\fD$  verifies almost surely the
following non-asymptotic inequality with a bias and a variance term.

\begin{prop} 
\label{prop:aggreg_gen_D}  
Let $K>0$.   Let $f$ and  $(f_k, 1\leq  k\leq N)$ be  probability density functions,
elements  of  $\cg$  such  that  $(t_k, 1\leq  k\leq  N)$  are  linearly
independent and  $\max_{1\leq k\leq  N} \norm{t_k}_\infty \leq  K$.  Let
$X=(X_1, \hdots  , X_n)$ be  a sample from  the model $\rP_f$.   Then the following inequality holds:
\begin{equation*}
      \kl{f}{\fD} - \kl{f}{f_{k^*}} \leq B_n\left(\tD-t_{k^*}\right)+
      \mathop{\max}_{1 \leq k \leq N} V^D_n(e_k) ,      
\end{equation*}
with    the    functional    $B_n$    given   by,    for    $\ell    \in
L^\infty(\R)$:
\begin{equation} \label{eq:def_Bn}
       B_n(\ell) = \sca{\ell}{\bar{f}_n-f}. 
\end{equation}
and the function $V^D_n: \Lambda^+ \rightarrow \R$ given by:
\begin{equation}\label{eq:def_V_D}
     V^D_n(\lambda)= \sca{I_{n}-\bar{f}_n}{t_{\lambda}-t_{k^*}} -
     \frac{\expp{-6K}}{4} \sum_{k=1}^N \lambda_k \norm{t_k -
       t_{k^*}}^2_{L^2(h)}. 
\end{equation}
\end{prop}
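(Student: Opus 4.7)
The plan is to apply Lemma \ref{lem:strong_conc_D} with $\lambda = e_{k^*}$ and then extract the bias and variance terms by splitting $I_n$ around its expectation. Subtracting two instances of \reff{eq:KL-ffD} gives
\[
\kl{f}{\fD} - \kl{f}{f_{k^*}} = \sca{t_{k^*} - \tD}{f} + (\yD - \psi_{k^*}).
\]
Since $f_{k^*}$ is a density, $\psi_{e_{k^*}} = \log \int \expp{t_{k^*}}\, h = \psi_{k^*}$, hence $\pen^D(e_{k^*}) = 0$. Applied to $\lambda = e_{k^*}$, the strong concavity bound \reff{eq:strong_conc_D} expanded using the definition of $H^D_n$ yields
\[
\sca{\tD - t_{k^*}}{I_n} - (\yD - \psi_{k^*}) - \tfrac{1}{2}\pen^D(\hat\lambda^D_*) \geq \tfrac{1}{2}\kl{\fD}{f_{k^*}},
\]
which upon rearranging and substitution gives
\[
\kl{f}{\fD} - \kl{f}{f_{k^*}} \leq \sca{\tD - t_{k^*}}{I_n - f} - \tfrac{1}{2}\pen^D(\hat\lambda^D_*) - \tfrac{1}{2}\kl{\fD}{f_{k^*}}.
\]

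Next I would split $I_n - f = (I_n - \bar f_n) + (\bar f_n - f)$ to recognise $B_n(\tD - t_{k^*}) = \sca{\tD - t_{k^*}}{\bar f_n - f}$ and rewrite the empirical linear part as $\sca{I_n - \bar f_n}{\tD - t_{k^*}} = \sum_k \hat\lambda^D_{*,k}\sca{I_n - \bar f_n}{t_k - t_{k^*}}$ by linearity of $\tD$ in $\hat\lambda^D_*$. Using $\pen^D(\hat\lambda^D_*) = \sum_k \hat\lambda^D_{*,k} \kl{\fD}{f_k}$, it then suffices to show
\[
\tfrac{1}{2}\sum_{k=1}^N \hat\lambda^D_{*,k}\kl{\fD}{f_k} + \tfrac{1}{2}\kl{\fD}{f_{k^*}} \geq \frac{\expp{-6K}}{4} \sum_{k=1}^N \hat\lambda^D_{*,k} \norm{t_k - t_{k^*}}^2_{L^2(h)},
\]
after which the right-hand side becomes $B_n(\tD - t_{k^*}) + \sum_k \hat\lambda^D_{*,k} V^D_n(e_k) \leq B_n(\tD - t_{k^*}) + \max_{1\leq k\leq N} V^D_n(e_k)$, since $\hat\lambda^D_* \in \Lambda^+$.

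The key ingredient is a quadratic lower bound of the form $\kl{\fD}{f_k} \geq c_K \norm{t_k - \tD}^2_{L^2(h)}$. I would derive it from a Taylor expansion along the log-affine path $s \mapsto (1-s) g_{\fD} + s\, g_k$: since $\fD$ and $f_k$ are both densities, the log-partition function $\phi(s) = \log \int \expp{(1-s) g_{\fD} + s\, g_k}\, h$ satisfies $\phi(0) = \phi(1) = 0$, and a direct computation identifies $\phi'(0) = -\kl{\fD}{f_k}$ and $\phi''(s) = \mathrm{Var}_{\mu_s}(t_k - \tD)$, where $\mu_s$ is the interpolating probability measure. Taylor with integral remainder then yields
\[
\kl{\fD}{f_k} = \int_0^1 (1-s)\, \mathrm{Var}_{\mu_s}(t_k - \tD)\, ds.
\]
The hypothesis $\max_k \norm{t_k}_\infty \leq K$ and \reff{eq:majo-mty} control the sup-norm of the log-density along the path, and hence the ratio $d\mu_s/(h\,dx)$, so $\mathrm{Var}_{\mu_s}(t_k - \tD)$ is comparable, up to a factor $\expp{-cK}$, to $\mathrm{Var}_h(t_k - \tD) = \norm{t_k - \tD}^2_{L^2(h)}$, where the last equality uses the centering $\int t_k\, h = \int \tD\, h = 0$. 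The same lower bound applies to $\kl{\fD}{f_{k^*}}$, and combining the two via $\norm{t_k - t_{k^*}}^2_{L^2(h)} \leq 2\norm{t_k - \tD}^2_{L^2(h)} + 2\norm{\tD - t_{k^*}}^2_{L^2(h)}$ produces the required inequality.

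The main obstacle is sharpening these bounds to obtain exactly the constant $\expp{-6K}/4$: each normalising factor $\psi$ adds on the order of $K$ to the sup-norm along the geodesic, and the triangle inequality loses a further factor of $2$, with the remaining $\tfrac{1}{2}$ from $H^D_n$ supplying the final factor of $4$. The remaining steps are direct algebraic consequences of the definitions of $H^D_n$, $\pen^D$, and $\fD$.
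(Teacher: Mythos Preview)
Your overall architecture coincides with the paper's: apply \reff{eq:strong_conc_D} at $e_{k^*}$, split $I_n-f=(\bar f_n-f)+(I_n-\bar f_n)$, and lower-bound the two Kullback--Leibler terms $\kl{\fD}{f_{k^*}}$ and $\sum_k\hat\lambda^D_{*,k}\kl{\fD}{f_k}$ by squared $L^2(h)$-norms. Two technical choices differ.

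For the quadratic lower bound $\kl{\fD}{f_k}\geq c_K\norm{\tD-t_k}^2_{L^2(h)}$ the paper does not integrate the Hessian of the log-partition along the path; it quotes the one-line inequality \reff{eq:BS1} (Lemma~1 of \cite{barron1991approximation}), namely $\kl{p}{q}\geq\tfrac12\expp{-\norm{\log(p/q)}_\infty}\int p\,(\log(p/q))^2$, which with $p=\fD$, $q=f_k$ and the bounds $\norm{\tD}_\infty,|\yD|,\norm{t_k}_\infty,|\psi_k|\leq K$ yields $c_K=\tfrac12\expp{-6K}$ directly. Your Taylor route is also valid---the comparison $\Var_{\mu_s}(\ell)\geq c_-\Var_h(\ell)$ does follow from $d\mu_s\geq c_-\,h$ via $\Var_\mu(\ell)=\min_a\int(\ell-a)^2\,d\mu$---and a careful count in fact gives the sharper $c_K=\tfrac12\expp{-3K}$.

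Where you lose is the combination step. You propose the crude bound $\norm{t_k-t_{k^*}}^2\leq 2\norm{t_k-\tD}^2+2\norm{\tD-t_{k^*}}^2$; the paper uses instead the \emph{identity} \reff{eq:bias_var},
\[
\sum_{k}\hat\lambda^D_{*,k}\norm{t_k-t_{k^*}}^2_{L^2(h)}=\norm{\tD-t_{k^*}}^2_{L^2(h)}+\sum_{k}\hat\lambda^D_{*,k}\norm{\tD-t_k}^2_{L^2(h)},
\]
so that the penalty and $\kl{\fD}{f_{k^*}}$ combine without loss to give exactly $\tfrac{c_K}{2}\sum_k\hat\lambda^D_{*,k}\norm{t_k-t_{k^*}}^2_{L^2(h)}$. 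With your triangle inequality and $c_K=\tfrac12\expp{-3K}$ the resulting constant is $\tfrac18\expp{-3K}$, which fails to dominate the target $\tfrac14\expp{-6K}$ when $K<\tfrac13\log 2$; swapping in \reff{eq:bias_var} removes this obstacle entirely (and then your constant beats the one in the statement).
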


\begin{proof}
Using \reff{eq:KL-ffD}, we get:
\[
 \kl{f}{\fD} - \kl{f}{f_{k^*}} 
 =  \sca{t_{k^*}-\tD}{f} + \yD  - \psi_{k^*}.
\]
By the  definition of  $k^*$, together with  $\pen^D(e_k)=0$ for  all $1
\leq  k \leq  N$  and the  strict  concavity \reff{eq:strong_conc_D}  of
$H^D_{n}$ at $\hat{\lambda}^D_*$ with $\lambda=e_{k^*}$, we get:
\begin{align*} 
\kl{f}{\fD} - \kl{f}{f_{k^*}}
& \leq   \sca{t_{k^*}-\tD}{f} + \yD - \psi_{k^*}  + H^D_{n}(\hat{\lambda}^D_*) - H^D_{n}(e_{k^*})
 -\inv{2}\kl{\fD}{f_{k^*}} \\ 
& =   \sca{\tD-t_{k^*}}{I_n- f}
-\inv{2}\kl{\fD}{f_{k^*}} - \inv{2}
\pen^D(\hat{\lambda}^D_*) \\ 
& =   B_n\left(\tD-t_{k^*}\right) + A^D_n,
\end{align*}
with:
\begin{equation} 
\label{eq:A_n_D}
A^D_n =  \sca{\tD-t_{k^*}}{I_{n}-\bar{f}_n}
-\inv{2}\kl{\fD}{f_{k^*}} - \inv{2} \sum_{k=1}^N
\hat{\lambda}^D_{*,k} \kl{\hat f^D_*}{f_{k}}. 
\end{equation}

We recall, see  Lemma 1 of \cite{barron1991approximation},  that for any
non-negative  integrable  functions $p$  and  $q$  on $\R^d$  satisfying
$\norm{\log(p/q)}_\infty < +\infty$, we have:
\begin{equation} \label{eq:BS1}
       \kl{p}{q} \geq \inv{2} \expp{-\norm{\log(p/q)}_\infty} \int p \left(\log(p/q)\right)^2 .
     \end{equation}
 We have:
\begin{align*}
 \kl{\fD}{f_k} 
& \geq \inv{2} \expp{-\norm{\log(\fD/ f_k)}_\infty}
\int \fD
\left(\log(\fD/f_k)\right)^2 \\ 
& \geq \inv{2}
\expp{-4K-\norm{\tD -\yD}_\infty}
\int h  \left(\log(\fD/f_k)\right)^2  \\ 
& \geq \inv{2} \expp{-6K}  \left( \norm{\tD -
    t_{k}}_{L^2(h)}^2 + (\yD-\psi_k)^2 \right) \\ 
& \geq \inv{2} \expp{-6K}  \norm{\tD-
  t_{k}}_{L^2(h)}^2, 
\end{align*}  
where we  used \reff{eq:BS1}  for the first  inequality, \reff{eq:psi>0}
for  the second,  and  \reff{eq:psi>0} as  well as  $\int  t_f h=0$  for
third. By using this  lower bound on $ \kl{\fD}{f_k} $  to both terms on
the right hand side of \reff{eq:A_n_D}, we get:
\begin{align*}
A^D_n   
& \leq  \sca{\tD-t_{k^*}}{I_{n}-\bar{f}_n} -
\frac{\expp{-6K}}{4}  \norm{\tD - t_{k^*}}_{L^2(h)}^2
- \frac{\expp{-6K}}{4}  \sum_{k=1}^N \hat{\lambda}^D_{*,k}
\norm{\tD - t_k}^2_{L^2(h)} \\ 
& = \sca{\tD-t_{k^*}}{I_{n}-\bar{f}_n}  -
\frac{\expp{-6K}}{4}  \sum_{k=1}^N \hat{\lambda}^D_{*,k}
\norm{t_k-t_{k^*} }^2_{L^2(h)} \\ 
& = V^D_n(\hat{\lambda}^D_*) ,
\end{align*}
where  the  first  equality  is   due  to  the  following  bias-variance
decomposition  equality  which  holds  for all  $\ell  \in  L^2(h)$  and
$\lambda \in \Lambda^+$:
\begin{equation} \label{eq:bias_var}
        \sum_{k=1}^N \lambda_k \norm{t_k - \ell}^2_{L^2(h)} =
        \norm{t_\lambda-\ell}^2_{L^2(h)} + \sum_{k=1}^N \lambda_k \norm{
          t_\lambda-t_k}^2_{L^2(h)}. 
\end{equation}
The  function $V^D_n$  is affine  in $\lambda$,  therefore it  takes its
maximum on $\Lambda^+$ at some $e_k$, $1 \leq k \leq N$, giving:
\begin{equation*} 
      \kl{f}{\fD} - \kl{f}{f_{k^*}} \leq B_n\left(\tD-t_{k^*}\right) +  \max_{1 \leq k \leq N} V^D_n(e_k).
     \end{equation*}
     This concludes the proof.     
     \end{proof}

\subsubsection{Non-negative functions} \label{sec:aggr_method_S}

In this  Section, we shall  consider non-negative functions. We  want to
estimate a  function $f \in \cg$  based on the estimators  $f_k \in \cg$
for $1  \leq k \leq N$.   Since most of  the proofs in this  Section are
similar to those  in Section \ref{sec:aggr_method_D}, we  only give them
when  there is  a substantial  new element.   Recall the  representation
\reff{eq:exp_f} of $f$  and $f_k$.  For $\lambda  \in \Lambda^+$ defined
by   \reff{eq:def-lambda+},   we   consider  the   aggregate   estimator
$f^D_\lambda$ given by  the convex aggregation of $( g_k,  1 \leq k \leq
N)$:
\begin{equation}
   \label{eq:aggr_f_S}
      f^S_\lambda = \exp\left( g_\lambda\right)\, h
      \quad\text{with}\quad 
      g_\lambda =\sum_{k=1}^N \lambda_k g_k.
\end{equation}
Notice   that  $\norm{g_\lambda}_\infty   \leq   \max_{1\leq  k\leq   N}
\norm{g_k}_\infty<+\infty  $,  that  is  $f^D_\lambda\in  \cg$.  We  set
$m_\lambda=m_{f^S_\lambda}$   the   integral   of   $f^S_\lambda$,   see
\reff{eq:def-m}.  The   Kullback-Leibler  distance  for   the  estimator
$f^S_\lambda$ of $f$ is given by:
\begin{equation}
\label{eq:kl_f_flambda_S}
     \kl{f}{f^S_\lambda}=\int f \log\left(f/f^S_\lambda \right) -
     m + m_\lambda=  \sca{g-g_\lambda}{f} - m + m_\lambda. 
\end{equation}   
Since  both  $g$  and  $g_\lambda$  are  bounded,  we   deduce  that
$\kl{f}{f_\lambda^S}  <   \infty$  for  all  $\lambda   \in  \Lambda^+$.
Minimization    of    the    Kullback-Leibler    distance    given    in
\reff{eq:kl_f_flambda_S} is therefore  equivalent to maximizing $\lambda
\mapsto     \sca{g_\lambda}{f}     -    m_\lambda$.      Notice     that
$\sca{g_\lambda}{f}$ is  linear in  $\lambda$ and the  function $\lambda
\mapsto  m_\lambda$ is  convex, since  the Hessian  matrix $\nabla^2
m_\lambda$ is given by: $\left[  \nabla^2 m_\lambda \right]_{i,j} = \int
g_i g_j f^S_\lambda $, which is positive-semidefinite.
As $I_n$ is a non-negative estimator of $f$ based on the sample $X=(X_1,
\ldots, X_n)$,  we estimate  the scalar product  $\sca{g_\lambda}{f}$ by
$\sca{g_\lambda}{I_{n}}$.   Here  we   select  the  aggregation  weights
$\lambda$ based  on the  penalized empirical  criterion $H^S_n(\lambda)$
given by:
\begin{equation}\label{eq:def_Hn_S}
     H^S_n(\lambda) = \sca{g_\lambda}{I_{n}}  -m_\lambda - \inv{2}
     \pen^S(\lambda),  
\end{equation}
    with the penalty term:
\[
      \pen^S(\lambda) = \sum_{k=1}^N \lambda_k \kl{f^S_\lambda}{f_k}=
      \sum_{k=1}^N \lambda_k  m_k -m_\lambda. 
\]
The choice of the factor $1/2$ for the penalty is justified by arguments
similar to  those given in  Remarks \ref{rem:1/2}.  The penalty  term is
always     non-negative     and    finite.      Let     $L^S_n(\lambda)=
\sca{g_\lambda}{I_{n}}  - \inv{2}  \sum_{k=1}^N \lambda_k  m_k$.  Notice
that  $L^S_n(\lambda)$   is  linear  in  $\lambda$,   and  that  $H^S_n$
simplifies to:
     \begin{equation} \label{eq:pen_alt_S}
        H^S_n(\lambda)= L^S_n(\lambda) - \inv{2} m_\lambda. 
     \end{equation}

Lemma  \ref{lem:strong_conc_S} below  asserts  that the  function
$H^S_{n}$  admits a  unique  maximizer  on $\Lambda^+$  and  that it  is
strictly concave around this maximizer.

\begin{lem}\label{lem:strong_conc_S}
  Let $f$  and $(f_k,  1\leq k\leq  N)$ be elements  of $\cg$  such that
  $(g_k, 1\leq  k\leq N)$  are linearly  independent.  Let  $H^S_{n}$ be
  defined   by   \reff{eq:def_Hn_S}.   Then  there   exists   a   unique
  $\hat{\lambda}^S_* \in \Lambda^+$ such that:
\begin{equation}
   \label{eq:aggr_lambda_S}
        \hat{\lambda}^S_* = \mathop{\argmax}_{\lambda \in \Lambda^+}
        H^S_n(\lambda). 
\end{equation}
Furthermore, for all $\lambda \in \Lambda^+$, we have:
\begin{equation}
\label{eq:strong_conc_S}
  H^S_{n}(\hat{\lambda}^S_*)-H^S_{n}(\lambda)      \geq      \frac{1}{2}
  \kl{f^S_{\hat{\lambda}^S_*}}{f^S_{\lambda}}.
\end{equation}
\end{lem}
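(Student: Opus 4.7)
The plan is to follow the same strategy as the proof of Lemma \ref{lem:strong_conc_D}, making the obvious substitutions $t_k \leadsto g_k$ and $\psi_\lambda \leadsto m_\lambda$. First I would verify that $\lambda \mapsto m_\lambda$ is convex and $C^1$ on $\Lambda^+$ (already noted in the text via the Hessian), with gradient $\nabla m_\lambda = (\sca{g_k}{f^S_\lambda}, 1\leq k \leq N)$, so that the decomposition \reff{eq:pen_alt_S} makes $H^S_n$ concave and differentiable on the compact convex set $\Lambda^+$. This guarantees that the set $\Lambda^* = \argmax_{\Lambda^+} H^S_n$ is non-empty.

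The heart of the argument is the analogue of \reff{eq:psi_lambda}. Using \reff{eq:kl_f_flambda_S} with $f$ replaced by $f^S_{\lambda'}$, we have
\[
\kl{f^S_{\lambda'}}{f^S_\lambda} = \sca{g_{\lambda'}-g_\lambda}{f^S_{\lambda'}} - m_{\lambda'} + m_\lambda,
\]
while the expression for $\nabla m_{\lambda'}$ gives
\[
(\lambda-\lambda')\cdot \nabla m_{\lambda'} = \sum_{k=1}^N (\lambda_k-\lambda'_k) \sca{g_k}{f^S_{\lambda'}} = \sca{g_\lambda - g_{\lambda'}}{f^S_{\lambda'}}.
\]
Adding these yields the clean identity $(\lambda-\lambda')\cdot \nabla m_{\lambda'} + \kl{f^S_{\lambda'}}{f^S_\lambda} = m_\lambda - m_{\lambda'}$, precisely parallel to \reff{eq:psi_lambda}.

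Combining this with the linearity of $L^S_n$ and \reff{eq:pen_alt_S}, I obtain the key two-point formula
\[
H^S_n(\lambda) - H^S_n(\lambda') = (\lambda-\lambda')\cdot \nabla H^S_n(\lambda') - \tfrac{1}{2}\kl{f^S_{\lambda'}}{f^S_\lambda}.
\]
Applied at $\lambda' = \hat\lambda^S_* \in \Lambda^*$, the first-order optimality condition $(\lambda - \hat\lambda^S_*)\cdot \nabla H^S_n(\hat\lambda^S_*) \leq 0$ (a standard fact on a convex compact domain, e.g.\ Eq.\ 4.21 in \cite{boyd2004convex}) immediately gives \reff{eq:strong_conc_S}.

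Uniqueness then drops out: if $\hat\lambda^1_*, \hat\lambda^2_* \in \Lambda^*$, applying \reff{eq:strong_conc_S} in both directions gives $\kl{f^S_{\hat\lambda^1_*}}{f^S_{\hat\lambda^2_*}} = 0$, hence $g_{\hat\lambda^1_*} = g_{\hat\lambda^2_*}$ almost everywhere on the common support $\ch$, and the linear independence of $(g_k, 1 \leq k \leq N)$ forces $\hat\lambda^1_* = \hat\lambda^2_*$. The only point requiring any real care is the gradient computation for $m_\lambda$ and the bookkeeping in the derivation of the identity displayed above, since unlike the density case $f^S_\lambda$ is not normalized and one must check that the extra $-m_{\lambda'}+m_\lambda$ terms appearing in \reff{eq:kl_f_flambda_S} combine correctly; this is the only substantive difference from the proof of Lemma \ref{lem:strong_conc_D}, and it is mechanical rather than conceptually difficult.
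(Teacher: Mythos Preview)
Your proposal is correct and follows exactly the approach of the paper's own proof, which simply records the identity $m_\lambda - m_{\lambda'} = (\lambda-\lambda')\cdot\nabla m_{\lambda'} + \kl{f^S_{\lambda'}}{f^S_\lambda}$ and then refers back to the proof of Lemma~\ref{lem:strong_conc_D}. If anything, you have written out more of the details than the paper does.
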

    
\begin{proof}
Notice that for all $\lambda,\lambda' \in \Lambda^+$:
\begin{equation} \label{eq:m_lambda}
        m_{\lambda}-m_{\lambda'} = (\lambda-\lambda') \cdot \nabla
        m_{\lambda'} + \kl{f_{\lambda'}}{f_{\lambda}}. 
\end{equation}
The  proof    is  then  similar    to    the    proof    of    Lemma
\ref{lem:strong_conc_D}    using    \reff{eq:m_lambda}    instead    of
\reff{eq:psi_lambda}.
\end{proof}

Using   $\hat{\lambda}^S_*$ defined  in
\reff{eq:aggr_lambda_S}, we set:
\begin{equation}
   \label{eq:def-f*S}
\fS=f^S_{\hat{\lambda}^D_*}\quad\text{and}\quad 
\gS=g_{\hat{\lambda}^S_*}.
\end{equation}
We show that the convex aggregate estimator
$\fS$  verifies almost surely the
following non-asymptotic inequality with a bias and a variance term.

\begin{prop} 
\label{prop:aggreg_gen_S} 

Let $K >0$.  Let  $f$ and $(f_k, 1 \leq k \leq N)$  be elements of $\cg$
such  that  $(g_k,   1\leq  k\leq  N)$  are   linearly  independent  and
$\max_{1\leq k\leq N} \norm{g_k}_\infty \leq  K$.  Let $X=(X_1, \hdots ,
X_n)$ be a sample from the  model $\rP_f$. Then the following inequality
holds:
\begin{equation*}
      \kl{f}{\fS} - \kl{f}{f_{k^*}} \leq
      B_n\left(\gS-g_{k^*}\right)+ \mathop{\max}_{1
        \leq k \leq N} V^S_n(e_k) ,      
\end{equation*}
with the  functional $B_n$ given  by \reff{eq:def_Bn}, and  the function
$V^S_n: \Lambda^+ \rightarrow \R$ given by:
\[
     V^S_n(\lambda)= \sca{g_{\lambda}-g_{k^*}}{I_{n}-\bar{f}_n} -
     \frac{\expp{-3K}}{4} \sum_{k=1}^N \lambda_k \norm{g_k -
       g_{k^*}}^2_{L^2(h)}. 
\]
\end{prop}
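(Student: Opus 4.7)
The plan is to mirror the proof of Proposition \ref{prop:aggreg_gen_D}, replacing the centered logarithms $t_k$ by the logarithms $g_k$, and using that $f^S_\lambda=\exp(g_\lambda)\,h$ has total mass $m_\lambda$ rather than $1$. First I would use the expression \reff{eq:kl_f_flambda_S} for the Kullback-Leibler divergence to write
\[
\kl{f}{\fS}-\kl{f}{f_{k^*}}=\sca{g_{k^*}-\gS}{f}+m_{\hat{\lambda}^S_*}-m_{k^*}.
\]
Next, by Lemma \ref{lem:strong_conc_S} applied at $\lambda=e_{k^*}$, the quantity $H^S_{n}(\hat{\lambda}^S_*)-H^S_{n}(e_{k^*})-\tfrac12\kl{\fS}{f_{k^*}}$ is non-negative, and since $\pen^S(e_{k^*})=0$ it expands, via the alternative form \reff{eq:pen_alt_S}, to
\[
\sca{\gS-g_{k^*}}{I_n}-m_{\hat{\lambda}^S_*}+m_{k^*}-\tfrac12\pen^S(\hat{\lambda}^S_*)-\tfrac12\kl{\fS}{f_{k^*}}.
\]
Adding this non-negative term to the previous identity and telescoping the $m$'s yields
\[
\kl{f}{\fS}-\kl{f}{f_{k^*}}\leq\sca{\gS-g_{k^*}}{I_n-f}-\tfrac12\pen^S(\hat{\lambda}^S_*)-\tfrac12\kl{\fS}{f_{k^*}}.
\]
Splitting $I_n-f=(I_n-\bar f_n)+(\bar f_n-f)$ isolates the bias functional $B_n(\gS-g_{k^*})$ defined in \reff{eq:def_Bn}, leaving a remainder $A^S_n$ analogous to \reff{eq:A_n_D}.

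The second step is to lower-bound every $\kl{\fS}{f_k}$ appearing in $A^S_n$ by a weighted $L^2(h)$-norm squared. Since $\log(\fS/f_k)=\gS-g_k$ and $\max_k\norm{g_k}_\infty\leq K$, we have $\norm{\log(\fS/f_k)}_\infty\leq 2K$; moreover $\fS\geq \expp{-\norm{\gS}_\infty}\,h\geq \expp{-K}\,h$. Plugging these into \reff{eq:BS1} gives
\[
\kl{\fS}{f_k}\geq \tfrac12\,\expp{-3K}\,\norm{\gS-g_k}_{L^2(h)}^2,
\]
which explains the constant $\expp{-3K}/4$ in the statement. Applying this bound to the penalty term and to $\kl{\fS}{f_{k^*}}$ turns $A^S_n$ into
\[
\sca{\gS-g_{k^*}}{I_n-\bar f_n}-\tfrac{\expp{-3K}}{4}\Bigl(\norm{\gS-g_{k^*}}_{L^2(h)}^2+\sum_{k=1}^N\hat{\lambda}^S_{*,k}\norm{\gS-g_k}_{L^2(h)}^2\Bigr).
\]

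The third step is to collapse the two $L^2(h)$ terms into a single one via the bias-variance identity \reff{eq:bias_var} applied with $t_k$ replaced by $g_k$, $t_\lambda$ by $g_{\hat{\lambda}^S_*}=\gS$, and $\ell=g_{k^*}$; this is a purely algebraic identity with no extra constraints, so it holds verbatim. The outcome is
\[
A^S_n\leq \sca{\gS-g_{k^*}}{I_n-\bar f_n}-\tfrac{\expp{-3K}}{4}\sum_{k=1}^N\hat{\lambda}^S_{*,k}\norm{g_k-g_{k^*}}_{L^2(h)}^2=V^S_n(\hat{\lambda}^S_*).
\]
Since $\lambda\mapsto V^S_n(\lambda)$ is affine on $\Lambda^+$, its maximum is attained at some vertex $e_k$, and the claimed inequality follows.

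I do not expect a serious obstacle: the linear independence of $(g_k)$ guarantees the well-posedness of $\hat{\lambda}^S_*$ via Lemma \ref{lem:strong_conc_S}, and all other ingredients (strong concavity, \reff{eq:BS1}, and \reff{eq:bias_var}) are already available. The only point requiring a bit of care is the lower bound on $\kl{\fS}{f_k}$: here the absence of a $\psi$-centering in the $S$-method actually simplifies matters, giving the cleaner exponent $-3K$ in place of $-6K$ from the density case.
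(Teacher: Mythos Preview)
Your proposal is correct and follows essentially the same route as the paper's proof: use \reff{eq:kl_f_flambda_S} together with the strong concavity \reff{eq:strong_conc_S} at $\lambda=e_{k^*}$ to obtain the bias term plus the remainder $A^S_n$ of \reff{eq:A_n_S}, then lower-bound each $\kl{\fS}{f_k}$ via \reff{eq:BS1} (yielding the constant $\expp{-3K}/2$ exactly as in \reff{eq:kl_geq_norm_S}), and finally collapse the two $L^2(h)$-terms using the bias-variance identity \reff{eq:bias_var} to get $V^S_n(\hat{\lambda}^S_*)$, which is bounded by its maximum over the vertices. The only cosmetic difference is that you cite \reff{eq:pen_alt_S} for the expansion of $H^S_n$, whereas the original form \reff{eq:def_Hn_S} gives it more directly; the resulting expressions coincide.
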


\begin{proof}
Similarly to the proof of Proposition \ref{prop:aggreg_gen_D} we obtain that:
\[
      \kl{f}{\fS} - \kl{f}{f_{k^*}} \leq B_n\left(\gS-g_{k^*}\right) +
      A^S_n, 
\]
with:
\begin{equation} \label{eq:A_n_S}
        A^S_n =
        \sca{\gS-g_{k^*}}{I_{n}-\bar{f}_n}-\inv{2}\kl{\fS}{f_{k^*}}  -
        \inv{2} \sum_{k=1}^N \hat{\lambda}^S_{*,k} \kl{\fS}{f_{k}}. 
\end{equation}
Since $\norm{\log(\fS/f_k)}_\infty = \norm{g_{\hat{\lambda}^*}-g_k} \leq
2K$ for  $1 \leq k  \leq N$, we can  apply \reff{eq:BS1} with  $\fS$ and
$f_k$:
\begin{align} \label{eq:kl_geq_norm_S}
      \nonumber  \kl{\fS}{f_k} 
& \geq \inv{2} \expp{-\left\| \log(\fS/ f_k)\right\|_\infty} \int \fS
\left(\log(\fS/f_k)\right)^2 \\ 
\nonumber
& \geq \inv{2} \expp{-2K-\norm{\gS}_\infty} \int h  \left(\gS-g_k\right)^2  \\
& \geq \inv{2} \expp{-3K}  \norm{\gS - g_{k}}_{L^2(h)}^2,
\end{align} 
where in the  second and third inequalities we use that  $\norm{\gS}_\infty \leq 
\max_{1\leq    k\leq   N}    \norm{g_k}_\infty   \leq    K$.    Applying
\reff{eq:kl_geq_norm_S}  to  both  terms  on  the  right  hand  side  of
\reff{eq:A_n_S} gives:
     \begin{align*}
        A_n (\hat{\lambda}^S_*)    & \leq  \sca{\gS-g_{k^*}}{I_{n}-\bar{f}_n} - \frac{\expp{-3K}}{4}  \norm{\gS - g_{k^*}}_{L^2(h)}^2  - \frac{\expp{-3K}}{4}  \sum_{k=1}^N \hat{\lambda}^S_{*,k}  \norm{\gS - g_k}^2_{L^2(h)} \\
         & = \sca{\gS-g_{k^*}}{I_{n}-\bar{f}_n}  - \frac{\expp{-3K}}{4}  \sum_{k=1}^N \hat{\lambda}^S_{*,k}  \norm{g_k-g_{k^*} }^2_{L^2(h)} \\
         & = V^S_n(\hat{\lambda}^S_*) , 
\end{align*}
where we used \reff{eq:bias_var} for  the second equality.  The function
$V^S_n$  is affine  in  $\lambda$,  therefore it  takes  its maximum  on
$\Lambda^+$ at some $e_k$, $1 \leq k \leq N$, giving:
\[
       \kl{f}{\fS} - \kl{f}{f_{k^*}}  \leq B_n\left(\gS-g_{k^*}\right) +
       \max_{1 \leq k \leq N} V^S_n(e_k). 
\]
This concludes the proof.      \end{proof}

\subsection{Applications} \label{sec:app}
     
In  this   section  we   apply  the   methods  established   in  Section
\ref{sec:aggr_method_D}  and \ref{sec:aggr_method_S}  to the  problem of
density estimation  and spectral  density estimation,  respectively.  By
construction,     the     aggregate     $f^D_\lambda$     of     Section
\ref{sec:aggr_method_D}  is  more  adapted for  the  density  estimation
problem  as it  produces a  proper density  function.  For  the spectral
density estimation problem, the aggregate $f^S_\lambda$ will provide the
correct results.
     
\subsubsection{Probability density estimation} \label{sec:app_D}

We  consider the
following subset of probability density functions, for $L>0$:
\[
 \cf^D(L)=\{f\in \cg; \, \norm{t_f}_\infty \leq L \text{ and } m_f=1\}.
\]
The  model $\{\rP_f,  f \in  \cf^D(L)\}$ corresponds  to i.i.d.   random
sampling  from a  probability density  $f \in  \cf^D(L) $,  that is  the
random variable $X=(X_1, \hdots.  X_n)$ has density $ f^{\otimes n}(x) =
\prod_{i=1}^n f(x_i)$, with $ x = (x_1, \hdots, x_n) \in (\R^d )^n$.  We
estimate the  probability measure $f(y)dy$ by  the empirical probability
measure $I_n(dy)$ given by:
\[
       I_n(dy)= \inv{n} \sum_{i=1}^n \delta_{X_i}(dy),
\]
where $\delta_y$ is  the Dirac measure at $y \in
\R^d$. Notice that $I_n$ is an unbiased estimator of $f$:
\[
       f(y)dy=\E [I_n (dy)]   \quad \quad \text{ for } y =  \R^d .
\]

In  the  following  Theorem,  we  give  a  sharp  non-asymptotic  oracle
inequality in  probability for  the aggregation  procedure $\fD$  with a
remainder   term   of   order   $\log(N)/n$.   We   prove   in   Section
\ref{sec:lower_D} the  lower bound  giving that  this remainder  term is
optimal.
     
\begin{theo} \label{theo:aggreg_density}
Let $L,K>0$.   Let $f\in \cf^D(L)$  and $(f_k, 1  \leq k \leq  N)$ be
elements of $\cf^D(K)$ such that $(t_k, 1  \leq k \leq  N)$ are linearly
independent.  Let $X=(X_1, \hdots , X_n)$
  be an i.i.d. sample from $f$. Let $\fD$ be given by
  \reff{eq:def-f*D}. 
     Then for any $x>0$ we have with probability greater than $1-\exp(-x)$:
    \begin{equation*}
      \kl{f}{\fD} -  \kl{f}{f_{k^*}} \leq  \frac{\beta (\log(N)+x)}{n},     
    \end{equation*}
     with $\beta= 2\exp(6K+2L)+4K/3$.
\end{theo}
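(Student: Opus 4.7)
The plan is to combine the general bound in Proposition \ref{prop:aggreg_gen_D} with a Bernstein-type deviation inequality, exploiting the fact that i.i.d. sampling makes the empirical measure unbiased.

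First, I would invoke Proposition \ref{prop:aggreg_gen_D} to get
\[
\kl{f}{\fD} - \kl{f}{f_{k^*}} \leq B_n(\tD - t_{k^*}) + \max_{1\leq k\leq N} V^D_n(e_k).
\]
For the i.i.d. density setting, $I_n$ is the empirical measure so that for any bounded $\ell$ one has $\sca{\ell}{\bar f_n} = \E[\sca{\ell}{I_n}] = \sca{\ell}{f}$. Hence the bias functional $B_n$ in \reff{eq:def_Bn} vanishes identically, and the entire problem reduces to controlling $\max_k V^D_n(e_k)$ in deviation.

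Next, I would fix $1\leq k\leq N$ and write
\[
V^D_n(e_k) = \frac{1}{n}\sum_{i=1}^n Z_i^{(k)} - \frac{\expp{-6K}}{4}\sigma_k^2, \qquad Z_i^{(k)} = (t_k-t_{k^*})(X_i)-\sca{t_k-t_{k^*}}{f},
\]
with $\sigma_k^2 = \norm{t_k-t_{k^*}}_{L^2(h)}^2$. The $Z_i^{(k)}$ are centred i.i.d.\ variables with $|Z_i^{(k)}|\leq 4K$ (from $\max_k\norm{t_k}_\infty\leq K$) and variance bounded by $\expp{2L}\sigma_k^2$ once one observes that $f\leq \expp{2L}h$ (since $f = \expp{g_f}h$ with $\norm{g_f}_\infty\leq \norm{t_f}_\infty+\psi_f\leq 2L$ by \reff{eq:majo-mty} and \reff{eq:psi>0}). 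Applying Bernstein's inequality, with probability at least $1-e^{-u}$,
\[
\frac{1}{n}\sum_{i=1}^n Z_i^{(k)} \leq \sqrt{\frac{2\expp{2L}\sigma_k^2 u}{n}} + \frac{C\,K\,u}{n}
\]
for a numerical constant $C$.

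The crucial step is then to absorb the $\sqrt{\sigma_k^2 u/n}$ term into the penalty $\tfrac{1}{4}\expp{-6K}\sigma_k^2$ by the weighted AM--GM inequality $\sqrt{AB}\leq \tfrac12(\alpha A + B/\alpha)$, choosing $\alpha$ so that the first contribution equals exactly $\tfrac14\expp{-6K}\sigma_k^2$. This leaves a purely deterministic remainder of order $\expp{6K+2L}\,u/n$ plus the linear Bernstein contribution $\propto Ku/n$, and hence
\[
V^D_n(e_k) \leq \bigl(2\expp{6K+2L} + \tfrac{4K}{3}\bigr)\frac{u}{n}
\]
with probability at least $1-e^{-u}$. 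Finally, applying a union bound over $1\leq k\leq N$ with the choice $u=\log(N)+x$ yields the claim with $\beta = 2\expp{6K+2L}+4K/3$ and probability at least $1-e^{-x}$.

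The main bookkeeping obstacle is the AM--GM tuning in the last paragraph: both the variance factor $\expp{2L}$ coming from bounding $f$ by $h$ and the penalty constant $\expp{-6K}/4$ from Proposition \ref{prop:aggreg_gen_D} must combine precisely to give the advertised $\beta$. The other potentially delicate point is the uniform bound on $Z_i^{(k)}$, where one has to be careful whether to use $\norm{t_k-t_{k^*}}_\infty\leq 2K$ or the centred version; choosing the cruder uniform bound leads directly to the $4K/3$ linear term stated in the theorem.
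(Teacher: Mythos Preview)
Your proposal is correct and follows essentially the same route as the paper. The only cosmetic difference is that the paper invokes a ready-made inequality (Proposition~5.3 of \cite{bellec2014optimal}) which states directly that for i.i.d.\ bounded variables $Y_i$ and any $a>0$,
\[
\P\!\left(\frac{1}{n}\sum_{i=1}^n\bigl(Y_i-\E Y_i-a\Var Y_i\bigr)>\Bigl(\tfrac{1}{2a}+\tfrac{b}{3}\Bigr)\tfrac{u}{n}\right)\leq e^{-u},
\]
and then plugs in $a=\tfrac{1}{4}e^{-6K-2L}$, $b=4K$; this is exactly Bernstein followed by the AM--GM absorption you describe, packaged into a single statement. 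Your explicit two-step version (Bernstein, then tune $\alpha$ so that the variance contribution matches $\tfrac{1}{4}e^{-6K}\sigma_k^2$) recovers the same constant $\beta=2e^{6K+2L}+4K/3$, and the union bound with $u=\log N+x$ is identical.
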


\begin{proof}
By Proposition \ref{prop:aggreg_gen_D}, we have that:
\begin{equation}  \label{eq:theo_density}
        \kl{f}{\fD} -  \kl{f}{f_{k^*}}  \leq
        B_n\left(\tD-t_{k^*}\right)+ \mathop{\max}_{1 \leq k \leq N}
        V^D_n(e_k). 
\end{equation}
Since $I_n(dy)$ is  an unbiased  estimator of $f(y)dy$,  we get $  B_n\left(\tD-t_{k^*}\right) =0$.
Notice that 
\begin{equation} \label{eq:bernstein}
      \P\left( V^D_n(e_k) \geq \frac{\beta(\log(N)+ x)}{n} \right) \leq
      \frac{\expp{-x}}{N} \quad \text{for all $1 \leq k \leq N$}, 
     \end{equation}
implies 
\[     
        \P\left( \max_{1 \leq k \leq N} V^D_n(e_k)  \geq \frac{\beta
            (\log(N)+x)}{n} \right) \leq  \expp{-x}, 
\] 
which will provide a  control of the second  term on the right  hand side of
\reff{eq:theo_density}.  Thus,  the  proof of  the  theorem  will  be
complete as soon as \reff{eq:bernstein} is proved.

To  prove \reff{eq:bernstein},  we use  the concentration  inequality of
Proposition 5.3 in \cite{bellec2014optimal}  which states that for $Y_1,
\hdots,  Y_n$ independent  random variables  with finite  variances such
that $\val{Y_i-\E Y_i}  \leq b$ for all  $1 \leq i \leq n$,  we have for
all $u>0$ and $a>0$:
     \begin{equation} \label{eq:bellec}
       \P\left( \inv{n}\sum_{i=1}^n \left(Y_i - \E Y_i - a \var Y_i\right) > \left(\inv{2a}+\frac{b}{3}\right)\frac{u}{n} \right) \leq \expp{-u}.
     \end{equation}
     Let us choose $Y_i= t_k(X_i)-t_{k^*}(X_i)$ for $1\leq i \leq
     n$. Then, since $f_k$ and $f_{k^*}$ belong to $\cf^D(K)$,
      we have $\val{Y_i-\E Y_i} \leq 4K$, and:
\begin{equation} \label{eq:var_maj}
       \var Y_i \leq \int (t_k-t_{k^*})^2 f \leq \expp{2L}
       \norm{t_k-t_{k^*}}^2_{L^2(h)}. 
\end{equation}
Applying   \reff{eq:bellec}   with    $a=\exp(-6K-2L)/4$,   $b=4K$   and
$u=\log(N)+x$, we obtain:
\begin{align*}
\frac{\expp{-x}}{N} 
&\geq \P\left( \sca{t_k-t_{k^*}}{I_n-\bar{f}_n} -
  \frac{\expp{-6K-2L}}{4} \var Y_1  > \frac{\beta(\log(N)+x)}{n} \right)
\\ 
&\geq \P\left( \sca{t_k-t_{k^*}}{I_n-\bar{f}_n} - \frac{\expp{-6K}}{4}
  \norm{t_k-t_{k^*}}^2_{L^2(h)}  > \frac{\beta ( \log(N)+ x)}{n} \right)
\\ 
& =  \P\left( V^D_n(e_k) > \frac{\beta (\log(N)+x)}{n} \right),
\end{align*}
where the  second inequality  is due  to \reff{eq:var_maj}.  This proves
\reff{eq:bernstein} and completes the proof.
\end{proof}

\begin{rem} \label{rem:f_D_f_S_diff}
We can  also use the aggregation method
  of    Section   \ref{sec:aggr_method_S}    and consider the
  normalized estimator
  $\tilde{f}^S_{*}= \fS/m_{\hat{\lambda}^S_*}= f^D_{\hat{\lambda}^S_*}$,
which  is a proper  density function.   Notice that the
  optimal weights  $\hat{\lambda}^D_*$ (which defines $\fD$) and
  $\hat{\lambda}^S_*$ (which defines $\tilde{f}^S_{*}$) maximize
  different criteria. Indeed, according to \reff{eq:aggr_lambda_S}  the
  vector $\hat{\lambda}^S_*$ maximizes: 
\[
H_n^S(\lambda) = \sca{g_\lambda}{I_n} - \inv{2}m_\lambda -
\inv{2}\sum_{k=1}^N \lambda_k m_k = \sca{g_\lambda}{I_n} -
\inv{2}m_\lambda - \inv{2}, 
\]
and according to \reff{eq:aggr_lambda_D} the vector $\hat{\lambda}^D_*$ maximizes:
\begin{align*}
H_n^D(\lambda) 
 = \sca{t_\lambda}{I_n} - \inv{2}\psi_\lambda - \inv{2}\sum_{k=1}^N
\lambda_k \psi_k 
 = \sca{g_\lambda}{I_n} 
-\inv{2}\psi_\lambda + \inv{2}\sum_{k=1}^N \lambda_k \psi_k   = \sca{g_\lambda}{I_n}  - \inv{2} \log(m_\lambda) , 
     \end{align*}
 where we used the identity $g_\lambda=t_\lambda - \sum_{k=1}^N
 \lambda_k \psi_k$ for the second equality and the  equality
$\log( m_\lambda) = \log \left( \int \expp{t_\lambda - \sum_{k=1}^N
    \lambda_k \psi_k} h  \right) = \psi_\lambda-\sum_{k=1}^N \lambda_k
\psi_k $
  for the
third. 
\end{rem}

\subsubsection{Spectral density estimation} \label{sec:app_S}
    
In  this section  we  apply  the convex  aggregation  scheme of  Section
\ref{sec:aggr_method_S}  to spectral  density  estimation of  stationary
centered Gaussian sequences. Let $h = 1/(2\pi) \ind_{[-\pi,\pi]}$ be the
reference density and $(X_k)_{k\in \Z}$ be a  stationary, centered
  Gaussian sequence with  covariance 
$\gamma$ function defined as, for $j \in \Z$:
\[
      \gamma_j=\Cov(X_k,X_{k+j}).
\]
Notice  that  $\gamma_{-j}=\gamma_j$.  Then the  joint  distribution  of
$X=(X_1, \ldots, X_n)$ is a multivariate, centered Gaussian distribution
with  covariance  matrix  $\Sigma_n  \in   \R^{n  \times  n}$  given  by
$ [\Sigma_n]_{i,j}  = \gamma_{i-j}$ for  $1\leq i,j \leq n$.  Notice the
sequence $(\gamma_j)_{j\in \Z}$ is semi-definite positive.

    We make the following standard assumption on the covariance function $\gamma$:
\begin{equation} \label{eq:cond_gamma}
\sum_{j=0}^\infty \val{\gamma_j} = C_1 < +\infty  . 
\end{equation}
The spectral density $f$ associated  to the process is the 
even  function defined  on $[-\pi,\pi]$  whose Fourier  coefficients are
$\gamma_j$:
\[
f(x)= \sum_{j\in \Z} \frac{\gamma_j}{2\pi} \expp{ijx}= \frac{\gamma_0}{2\pi}
+ \inv{\pi} \sum_{j=1}^\infty \gamma_j \cos(jx). 
\]
The first condition in \reff{eq:cond_gamma}  ensures that the spectral density
is  well-defined,  continuous  and  bounded by  $C_1/\pi$.  It  is  also
even and non-negative as $(\gamma_j)_{j\in \Z}$ is semi-definite positive.  The
function   $f$  completely   characterizes  the  model     as:
\begin{equation}
   \label{eq:def-ga}
\gamma_j=\int_{-\pi}^\pi f(x) \expp{ijx} \, dx= \int_{-\pi}^\pi f(x)
\cos(jx) \, dx\quad\text{for $j\in \Z$.} 
\end{equation}

For $\ell\in L^1(h)$, we define the corresponding
Toeplitz $T_n(\ell)$ of size $n\times n$ by:
\[
[T_n(\ell)]_{j,k}=\inv{2\pi}\int_{-\pi}^\pi \ell(x) \expp{i(j-k)x} \, dx.
\]
Notice that $T_n(2\pi f)=\Sigma_n$. Some  properties of the Toeplitz
matrix $T_n(\ell)$ are collected in Section \ref{sec:toeplitz}. 

We choose the following estimator of $f$, for $x \in [-\pi,\pi]$:
\[
I_{n}(x) = \frac{\hat{\gamma}_0}{2\pi} + \inv{\pi} \sum_{j=1}^{n-1}
\hat{\gamma}_j \cos(jx), 
\]
with $(\hat{\gamma}_j,  0 \leq j  \leq n-1)$ the empirical  estimates of
the correlations $(\gamma_j, 1 \leq j \leq n-1)$:
\begin{equation} \label{eq:def_gamma_est}
  \hat{\gamma}_j = \inv{n} \sum_{i=1}^{n-j} X_i X_{i+j}. 
\end{equation}
The function $I_{n}$ is a biased estimator, where the bias is due to two
different  sources:  truncation of  the  infinite  sum  up to  $n$,  and
renormalization in \reff{eq:def_gamma_est} by  $n$ instead of $n-j$ (but
it  is asymptotically  unbiased as  $n$  goes to  infinity if  condition
\reff{eq:cond_gamma} is  satisfied). The  expected value  $\bar{f}_n$ of
$I_{n}$ is given by:
\[
\bar{f}_n(x) =  \sum_{|j|< n} \left(1- \frac{|j|}{n}\right) \frac{\gamma_j}{2\pi} \expp{jx}
=
\frac{\gamma_0}{2\pi} + \inv{\pi} \sum_{j=1}^{n-1}
\frac{(n-j)}{n} \gamma_j \cos(jx). 
\]

In order to be able  to apply Proposition \ref{prop:aggreg_gen_S}, we assume
that $f$ and  the estimators $f_1, \hdots, f_N$ of  $f$ belongs to $\cg$
(they are in particular positive and bounded) and are even functions. In
particular the  estimators $f_1, \hdots,  f_N$ and the  convex aggregate
estimator  $\fS$  defined  in   \reff{eq:def-f*S}  are  proper  spectral
densities of stationary Gaussian sequences.

\begin{rem}
  By  choosing  $h  =   1/(2\pi)  \ind_{[-\pi,\pi]}$,  we  restrict  our
  attention to spectral  densities that are bounded  away from $+\infty$
  and $0$, see \cite{m:drsts} and \cite{b:psdf} for the characterization
  of such  spectral densities.  Note  that we can apply  the aggregation
  procedure to  non even  functions $f_k$,  $1 \leq k  \leq N$,  but the
  resulting estimator  would not  be a proper  spectral density  in that
  case.
\end{rem}

To prove a  sharp oracle  inequality for the  spectral density
estimation, since  $I_n$ is a biased  estimator of $f$, we  shall assume
some   regularity on the  functions $f$  and $f_1, \hdots,  f_N$ in
order to be able to control the bias term. More precisely those
conditions will be Sobolev conditions on  their logarithm, that is on the
functions $g$ and $g_1, \hdots, g_N$ defined by \reff{eq:def-g}.

For            $\ell\in            L^2(h)$,           the corresponding 
Fourier coefficients are defined  for $k\in \Z$ by
$a_k=\inv{2\pi} \int_{-\pi}^\pi \expp{-ikx} \ell(x)\, dx$.
From     the    Fourier     series     theory,     we    deduce     that
$\sum_{k\in    \Z}    |a_k|^2    =\norm{\ell}^2_{L^2(h)}$    and    a.e.
$\ell(x)=\sum_{k\in    \Z}    a_k     \expp{ikx}$.     If    furthermore
$\sum_{k\in   \Z}  |a_k|$   is  finite,   then  $\ell$   is  continuous,
$\ell(x)=\sum_{k\in  \Z}  a_k \expp{ikx}$  for  $x\in  [-\pi, \pi]$  and
$\norm{\ell}_\infty \leq \sum_{k\in \Z} |a_k|$.

For $r> 0$, we
define the Sobolev norm $\norm{\ell}_{2,r}$ of $\ell$ as:
\[
\norm{\ell}_{2,r}^2 =\norm{\ell}^2_{L^2(h)}+  \{\ell\}_{2,r}^2 \quad\text{with}\quad
\{\ell\}_{2,r}^2=\sum_{k\in \Z}
|k|^{2r} |a_k|^2.
\]
The corresponding      Sobolev space is
defined by:
\[
W_r=\{\ell\in  L^2(h); \, \norm{\ell}_{2,r}<+\infty \}.
\]
For $r>1/2$,  we can bound the supremum norm of $\ell$ by its Sobolev
norm:
\begin{equation}
   \label{eq:sobolev-ineq}
\norm{\ell}_\infty \leq \sum_{k\in \Z} |a_k|
\leq  \cc_r \{\ell\}_{2,r}\leq \cc_r \norm{\ell}_{2,r},
\end{equation}
where we used   Cauchy-Schwarz inequality for the
second inequality with
\begin{equation} \label{eq:def_C_r}
\cc_r^2 = \sum_{k\in \Z^*} |k|^{-2r} < + \infty. 
\end{equation}

The proof of the following Lemma seems to be part of the folklore, but
since we didn't find a proper reference, we give it in   Section
\ref{sec:preuve-exp-S}. 
\begin{lem}
   \label{lem:reg}
   Let $r>1/2$, $K>0$. There exists a finite constant $C(r,K)$ such that
   for  any  $g\in  W_r$  with   $\norm{g}_{2,r}\leq  K$,  then  we  have
   $\norm{\exp(g)}_{2,r}\leq C(r,K)$.
\end{lem}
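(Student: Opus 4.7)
The strategy is to reduce Lemma \ref{lem:reg} to the fact that $W_r$ is a Banach algebra when $r>1/2$: there exists a finite constant $A_r$ such that $\norm{fg}_{2,r}\leq A_r \norm{f}_{2,r}\norm{g}_{2,r}$ for all $f,g\in W_r$. Once this is granted, an easy induction yields $\norm{g^n}_{2,r}\leq A_r^{n-1}\norm{g}_{2,r}^n$ for every integer $n\geq 1$ (with $g^0=1$ and $\norm{1}_{2,r}=1$). Summing the absolutely convergent Taylor series, I obtain $\norm{\exp(g)}_{2,r}\leq \sum_{n=0}^\infty \norm{g^n}_{2,r}/n!\leq 1+ A_r^{-1}(\expp{A_r K}-1)$, which is the announced constant $C(r,K)$; the fact that the partial sums form a Cauchy sequence in $W_r$ shows that $\exp(g)$ actually belongs to $W_r$.

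To establish the algebra bound, let $(a_k)_{k\in\Z}$ and $(b_k)_{k\in\Z}$ denote the Fourier coefficients of $f,g\in W_r$, so that those of $fg$ are given by the convolution $c_k=\sum_{\ell\in\Z} a_\ell b_{k-\ell}$. The $L^2(h)$ part is straightforward: by \reff{eq:sobolev-ineq}, $\norm{fg}_{L^2(h)}\leq \norm{f}_\infty \norm{g}_{L^2(h)}\leq \cc_r \norm{f}_{2,r}\norm{g}_{2,r}$. For the homogeneous part $\{fg\}_{2,r}$, I would use the elementary inequality $|k|^r \leq \kappa_r(|\ell|^r+|k-\ell|^r)$ with $\kappa_r=\max(1,2^{r-1})$, which produces $|k|^r |c_k|\leq \kappa_r (P*|b|)_k + \kappa_r(|a|*Q)_k$ where $P_\ell=|\ell|^r|a_\ell|$ and $Q_\ell=|\ell|^r|b_\ell|$. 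Young's inequality $\norm{u*v}_{\ell^2}\leq \norm{u}_{\ell^1}\norm{v}_{\ell^2}$, together with $\norm{P}_{\ell^2}=\{f\}_{2,r}$, $\norm{Q}_{\ell^2}=\{g\}_{2,r}$, and the bound $\sum_{k\in\Z}|a_k|\leq (1+\cc_r)\norm{f}_{2,r}$ (which follows from Cauchy--Schwarz on the sum over $k\neq 0$ plus $|a_0|\leq \norm{f}_{L^2(h)}$, sharpening the chain \reff{eq:sobolev-ineq}), then yields $\{fg\}_{2,r}\leq 2\kappa_r(1+\cc_r)\norm{f}_{2,r}\norm{g}_{2,r}$. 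Combining the two estimates gives the algebra property with an $A_r$ depending only on $r$.

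The only real work is the convolution estimate for $\{fg\}_{2,r}$; after that, the induction and the series summation are entirely routine. The condition $r>1/2$ enters crucially through \reff{eq:def_C_r}, which simultaneously guarantees $\cc_r<\infty$ (whence the $\ell^1$ bound on Fourier coefficients) and the summability needed for Young's inequality to convert the bilinear convolution bound into an $\ell^2$ estimate on $(|k|^r|c_k|)_k$.
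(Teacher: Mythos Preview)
Your proof is correct and takes a genuinely different route from the paper's. You argue that $W_r$ is a Banach algebra for $r>1/2$ via the elementary Fourier-side estimate $|k|^r\leq \kappa_r(|\ell|^r+|k-\ell|^r)$ combined with Young's convolution inequality, and then sum the Taylor series of $\exp$; this treats all $r>1/2$ uniformly in one stroke. The paper instead works directly on $\exp(g)$ and splits into three cases: for $r\in(1/2,1)$ it introduces the Gagliardo-type integral seminorm $I_r(\ell)=\frac{1}{2\pi}\int_{[-\pi,\pi]^2}|\ell(x+y)-\ell(x)|^2\,|y|^{-1-2r}\,dx\,dy$, shows $I_r(\ell)\asymp\{\ell\}_{2,r}^2$, and then exploits the pointwise Lipschitz bound $|\expp{x}-\expp{y}|\leq \expp{L}|x-y|$ on $[-L,L]$; for $r\in\N^*$ it uses the Leibniz/Fa\`a di Bruno formula for $f^{(r)}$ with $f=\expp{g}$; for non-integer $r>1$ it combines the two devices. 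Your approach is shorter and more conceptual, and gives a tidy explicit constant $C(r,K)=1+A_r^{-1}(\expp{A_rK}-1)$. The paper's approach, by working with the composition $\exp\circ g$ rather than the power series, would generalise more readily to other smooth (but non-analytic) outer functions, and its Gagliardo representation makes the Lipschitz mechanism transparent; but for the exponential specifically your algebra argument is the cleaner path.

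One small cosmetic point: when you invoke \reff{eq:sobolev-ineq} for the $L^2$ part you use the constant $\cc_r$, whereas for the $\ell^1$ bound on Fourier coefficients you (rightly) add the $|a_0|\leq\norm{f}_{L^2(h)}$ contribution and obtain $(1+\cc_r)$. Strictly, the same correction applies to $\norm{f}_\infty$ as well, since the chain in \reff{eq:sobolev-ineq} omits the zero mode; this only changes $A_r$ by a harmless factor and does not affect the argument.
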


For $r>1/2$, we consider the following subset of functions:
\begin{equation}
\label{eq:def-FS}
\cf^S_r(L)=
\{f\in \cg: \, \norm{g_f}_{2,r}\leq L/\cc_r \text{ and
  $g_f$ even}\}.
\end{equation}
For $f\in \cf^S_r(L)$, we  deduce from \reff{eq:sobolev-ineq} that $g_f$
is continuous (and bounded by $L$).  This implies that $f$ is a positive,
continuous, even function and thus  a proper spectral density. Notice
that $2\pi\norm{f}_\infty \leq \exp(L)$ . We deduce  from \reff{eq:def-ga} that
$\gamma_k=\int_{-\pi}^\pi \expp{-ikx} f(x)\, dx$ and thus:
\[
\norm{f}_{2,r}^2= \frac{\gamma_0^2}{4\pi^2} + \inv{2\pi^2} \sum_{k=1}^{\infty }
(1+k^{2r}) \gamma_k^2.
\]
Thus Lemma \ref{lem:reg} and \reff{eq:sobolev-ineq} imply also that  the covariance function
associated to $f\in \cf^S_r(L)$ satisfies \reff{eq:cond_gamma}. We also get that 
$\sum_{j=1}^\infty j \gamma_j^2 < + \infty$, which is a standard assumption for 
spectral density estimation.

The following Theorem is the main result of this section. 
      
\begin{theo} \label{theo:aggreg_spectral}  Let $r > 1/2$,  $K,L>0$.  Let
  $f \in \cf_r^S(L)$ and $(f_k, 1\leq  k\leq N)$ be elements of $\cf_r^S(K)$
  such  that  $(g_k, 1\leq  k\leq  N)$  are linearly  independent.   Let
  $X=(X_1, \ldots, X_n)$  be a sample of a  stationary centered Gaussian
  sequence  with  spectral   density  $f$.   Let  $\fS$   be  given   by
  \reff{eq:aggr_f_S}.   Then  for any  $x>0$,  we have  with
  probability higher than $1-\exp(-x)$:
\begin{equation*}
\kl{f}{\fS} -  \kl{f}{f_{k^*}} \leq \frac{\beta (\log(N)+x)}{n}
+ \frac{\alpha}{n},      
\end{equation*}
with $\beta= 4 (K \expp{L} +  \expp{2L+3K})$ and $\alpha=4KC(r,L)/\cc_r$. 
\end{theo}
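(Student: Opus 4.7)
The plan is to apply Proposition \ref{prop:aggreg_gen_S}, which gives the decomposition
\[
\kl{f}{\fS} - \kl{f}{f_{k^*}} \leq B_n(\gS - g_{k^*}) + \max_{1\leq k\leq N} V^S_n(e_k),
\]
and then separately control the bias functional $B_n$ (a deterministic bound using Sobolev regularity) and the variance term $\max_k V^S_n(e_k)$ (a stochastic bound using concentration for Gaussian quadratic forms). This parallels the proof of Theorem \ref{theo:aggreg_density}, except that now $I_n$ is biased, so unlike the density case, $B_n(\gS - g_{k^*})$ does not vanish and produces the additional deterministic $\alpha/n$ term.

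For the bias, I would use Parseval with respect to the Fourier basis. The Fourier coefficients of $\bar f_n - f$ are $-|j|\gamma_j/(2\pi n)$ for $|j|<n$ and $-\gamma_j/(2\pi)$ for $|j|\geq n$, and in both cases they are bounded in absolute value by $|j||\gamma_j|/(2\pi n)$. Hence, writing $\ell = \gS - g_{k^*}$ with Fourier coefficients $a_j$,
\[
|B_n(\ell)| \leq \frac{1}{n}\sum_{j\in\Z} |a_j|\,|j|\,|\gamma_j| \leq \frac{\{\ell\}_{2,r}}{n}\left(\sum_{j\in \Z}|j|^{2(1-r)}\gamma_j^2\right)^{1/2}
\]
by Cauchy--Schwarz with the splitting $|j| = |j|^r \cdot |j|^{1-r}$. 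Since $r>1/2$, one has $|j|^{2(1-r)}\leq |j|^{2r}$ for $|j|\geq 1$, so the second factor is controlled by the Sobolev norm of $2\pi f$. By Lemma \ref{lem:reg} applied to $g_f \in W_r$ with $\norm{g_f}_{2,r}\leq L/\cc_r$, $2\pi f = \exp(g_f)$ lies in $W_r$ with Sobolev norm bounded in terms of $C(r,L)$, which controls $(\sum |j|^{2r}\gamma_j^2)^{1/2}$. Convexity gives $\{\gS\}_{2,r}\leq K/\cc_r$, hence $\{\gS - g_{k^*}\}_{2,r}\leq 2K/\cc_r$, yielding $|B_n(\gS - g_{k^*})| \leq \alpha/n$ after collecting the constants.

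For the variance term, the key identity is that for any real even $\ell \in L^2(h)$ with Fourier coefficients $(a_j)$, the quantity $\sca{\ell}{I_n - \bar f_n}$ equals the centered Gaussian quadratic form $n^{-1}(X^\top T_n(\ell) X - \E[X^\top T_n(\ell) X])$, because $\sca{\ell}{I_n} = a_0\hat\gamma_0 + 2\sum_{j=1}^{n-1} a_j \hat\gamma_j = n^{-1} X^\top T_n(\ell) X$. I would then apply the standard Hanson--Wright/Laurent--Massart bound for Gaussian chaos: for $t>0$, with probability at least $1-e^{-t}$,
\[
\sca{\ell}{I_n - \bar f_n} \leq \frac{2}{n}\sqrt{t\cdot\tr{(T_n(\ell)\Sigma_n)^2}} + \frac{2t}{n}\,\norm{T_n(\ell)\Sigma_n}_{\mathrm{op}}.
\]
Using the Toeplitz bounds from the Appendix --- $\norm{\Sigma_n}_{\mathrm{op}}\leq\norm{2\pi f}_\infty\leq e^L$, $\norm{T_n(\ell)}_{\mathrm{op}}\leq \norm{\ell}_\infty$, and $\norm{T_n(\ell)}_F^2\leq n\norm{\ell}_{L^2(h)}^2$ --- together with the inequality $\tr{(T_n(\ell)\Sigma_n)^2}\leq \norm{\Sigma_n}_{\mathrm{op}}^2\norm{T_n(\ell)}_F^2$, one gets $\tr{(T_n(\ell)\Sigma_n)^2}\leq n e^{2L}\norm{\ell}_{L^2(h)}^2$. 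A weighted AM--GM $2\sqrt{ab}\leq \mu a + b/\mu$ with $\mu = e^{-2L-3K}/4$ then lets me absorb the square-root term into $(e^{-3K}/4)\norm{\ell}_{L^2(h)}^2$ (matching the penalty in $V^S_n$), producing a remainder of order $(4e^{2L+3K} + 2e^L\norm{\ell}_\infty)t/n$. Specializing to $\ell = g_k - g_{k^*}$ (so $\norm{\ell}_\infty\leq 2K$) and setting $t=\log N + x$, a union bound over $k$ concludes.

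The main obstacle is the variance step: finding the right Toeplitz trace inequality $\tr{(T_n(\ell)\Sigma_n)^2}\leq n e^{2L}\norm{\ell}_{L^2(h)}^2$ that features $\norm{\ell}_{L^2(h)}^2$ rather than $\norm{\ell}_\infty^2$, which is what makes the self-bounding AM--GM collapse the square-root term into the existing $L^2(h)$-penalty of $V^S_n$. Without this bound one would only obtain $\norm{\ell}_\infty^2 t/n$ and lose the sharp constant in front of $\log N$.
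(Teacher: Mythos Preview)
Your proposal is correct and follows essentially the same route as the paper: Proposition~\ref{prop:aggreg_gen_S} for the decomposition, the Gaussian chaos concentration inequality together with the Toeplitz bounds $\rho(\Sigma_n)\leq e^L$ and $\tr{(\Sigma_n T_n(\ell))^2}\leq e^{2L}n\norm{\ell}_{L^2(h)}^2$, and the AM--GM trick with the same choice of parameter to absorb the square-root term into the $L^2(h)$ penalty of $V^S_n$. The only (minor) difference is in the bias step: the paper splits $\bar f_n-f$ into a truncation part $\bar f_{n,1}-f$ and a triangular-weight part $\bar f_{n,2}$ and bounds each via Cauchy--Schwarz (getting $O(n^{-2r})$ and $O(n^{-1})$ respectively), whereas you observe directly that every Fourier coefficient of $\bar f_n-f$ is dominated by $|j|\,|\gamma_j|/(2\pi n)$ and apply a single Cauchy--Schwarz, which is slightly more economical and yields the same $\alpha/n$ bound.
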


 \begin{rem}
   When the value  of $\gamma_0$ is given, we shall  use the aggregation
   method  of  Section  \ref{sec:aggr_method_D}  after  normalizing  the
   estimators  $f_k$, $1\leq  k \leq  N$  by dividing  $f_k$ with  $m_k=\int f_k$.
   The    final    estimator    of    $f$   would    take    the    form
   $\tilde{f}^D_{\hat{\lambda}^D_*} =  \gamma_0 f^D_{\hat{\lambda}^D_*}$
   and  verifies a  similar sharp  oracle inequality  as $\fS$  (that is
   without       the       term        $\alpha/n$       of       Theorem
   \ref{theo:aggreg_spectral}). When the value of $\gamma_0$ is unknown,
   it        could       be        estimated       empirically        by
   $\hat{\gamma_0}  =  \inv{n} \sum_{i=1}^n  X_i^2$.  Then we could  use
   $  \hat{\gamma}_0 f^D_{\hat{\lambda}^D_*}$  to estimate  $f$. However
   the empirical  estimation of $\gamma_0$  introduces an error  term of
   order $1/\sqrt{n}$,  which leads to  a suboptimal remainder  term for
   this aggregation method.
 \end{rem}

\begin{proof}
Using Proposition \ref{prop:aggreg_gen_S} and the notations defined there, we have that:
\begin{equation}  \label{eq:theo_spectral}
 \kl{f}{\fS} -  \kl{f}{f_{k^*}}  \leq  B_n\left(\gS-g_{k^*}\right)+
 \mathop{\max}_{1 \leq k \leq N} V^S_n(e_k). 
\end{equation}
  
\subsubsection*{First step: Concentration inequality for $
\mathop{\max}_{1 \leq k \leq N} V^S_n(e_k)$.} 
We shall prove that 
\begin{equation} \label{eq:unif_exp_bound}
 \P\left( \max_{1 \leq k \leq N} V^S_n(e_k) \geq \frac{\beta
     (\log(N)+x)}{ n} \right) \leq  \expp{-x}. 
\end{equation}
It is enough to prove that for each $1 \leq k \leq N$:
\begin{equation} \label{eq:conc_ineq_spec}
\P\left( V^S_n(e_k) \geq \frac{\beta u}{n} \right) \leq  \expp{-u}.
\end{equation}
Indeed take $u=\log(N)+x$ and  the union bound over $1 \leq k \leq N$ to
deduce \reff{eq:unif_exp_bound} from \reff{eq:conc_ineq_spec}. 

The   end  of   this   first   step  is   devoted   to   the  proof   of
\reff{eq:conc_ineq_spec}.  Recall definition \reff{eq:toeplitz} of
Toeplitz matrices associated to Fourier coefficients.   We     express    the     scalar    product
$\sca{\ell}{I_{n}}$  for $\ell  \in \mathbb{L}^\infty([-\pi,\pi])$  in a
matrix form:
\begin{equation}
\label{eq:In_g}
 \sca{\ell}{I_{n}}  =   \inv{2 \pi n} \sum_{i=1}^n \sum_{j=1}^n X_i X_j
 \int_{-\pi}^\pi \ell(x) \cos((i-j)x) \, dx 
 = \inv{ n} X^T T_{n}(\ell) X.
\end{equation} 
We  have the  following  expression  of the  covariance  matrix of  $X$:
$\Sigma_n = 2\pi T_n(f)$.  Since $f$ is positive, we get that $\Sigma_n$
is positive-definite.  Set $\xi=\Sigma_n^{-1/2}  X$ so  that $\xi$  is a
centered $n$-dimensional Gaussian vector  whose covariance matrix is the
$n$-dimensional  identity  matrix.   By  taking the  expected  value  in
\reff{eq:In_g}, we obtain:
      \[
       \E \sca{\ell}{I_{n}}  = \sca{\ell}{\bar{f}_n} = \inv{ n} \tr{R_n(\ell)}, 
      \]
      where $\tr{A}$ denotes the trace of the matrix $A$, and $R_n(\ell) = \Sigma^{\inv{2}}_n T_n(\ell) \Sigma^{\inv{2}}_n$. Therefore the difference $\sca{\ell}{I_n-\bar{f}_n}$ takes the form:
      \[
        \sca{\ell}{I_n-\bar{f}_n} = \inv{ n} \left(\xi^T R_{n}(\ell) \xi - \tr{R_n(\ell)} \right).
      \]

      We  shall take $\ell=g_k-g_{k^*}$. For  this reason, we assume that
      $\ell$   is    even   and   $\norm{\ell}_\infty\leq    2K$.    Let
      $\eta=(\eta_i, 1  \leq i  \leq n)$ denote  the eigenvalues  of the
      symmetric  matrix $R_n(\ell)$,  with $\eta_1$  having the  largest
      absolute     value.     Similarly     to     Lemma    4.2.      of
      \cite{bigot2009adaptive}, we have that for all $a>0$:
\begin{align}
\label{eq:bigot}
\nonumber
 \expp{-u}  
& \geq   \P\left(  \sca{\ell}{I_n-\bar{f}_n} \geq
  \frac{2\val{\eta_1}u}{ n} + \frac{2\norm{\eta} \sqrt{u}}{ n} \right)  \\
& \geq   \P\left(  \sca{\ell}{I_n-\bar{f}_n} \geq
  \frac{2\val{\eta_1}u}{ n} + \frac{\norm{\eta}^2}{ a n} + \frac{a u}{
  n} \right)  ,
\end{align}
where we  used for the  second inequality that $2\sqrt{vw}  \leq v/a+aw$
for all  $v,w,a > 0$.  Let us give  upper bounds for  $\val{\eta_1}$ and
$\norm{\eta}^2$.  We note  $\rho(A)$ for  $A  \in \R^{n  \times n}$  the
spectral radius of the matrix $A$.  Then by the well-known properties of
the spectral radius, we have that:
\[
\val{\eta_1}=\rho(R_n(\ell)) \leq \rho(\Sigma_n) \rho(T_n(\ell)) 
\]
We deduce from \reff{eq:upper-rho-T} that 
$\rho(\Sigma_n)=\rho(2\pi  T_n(f)) \leq  2\pi \norm{f}_\infty  \leq 
\exp(L)$
and $\rho(T_n(\ell)) \leq \norm{\ell}_\infty\leq 2K$.  Therefore we obtain:
\begin{equation} \label{eq:eta_1}
\val{\eta_1}  \leq 2 K \expp{L}. 
\end{equation}
As for $\norm{\eta}^2$, we have:
\begin{equation} 
\label{eq:eta^2}
\norm{\eta}^2 = \tr{R^2_n(\ell)} = \tr{(\Sigma_n
  T_n(\ell))^2} \leq \rho(\Sigma_n)^2 \tr{T^2_n(\ell)}  
\leq   \expp{2L} \, n\norm{\ell}^2_{L^2(h)},
\end{equation}
where we used \reff{eq:Tope-tr2} for the last inequality. Using \reff{eq:eta_1} and \reff{eq:eta^2} in \reff{eq:bigot} gives:
\begin{align*}
 \expp{-u} 
& \geq \P\left(  \sca{\ell}{I_n-\bar{f}_n} \geq \frac{4 K
  \expp{L} u}{ n} + \frac{ \expp{2L}   \norm{\ell}^2_{L^2(h)}}{ a } +
  \frac{a u}{  n} \right) \\ 
& \geq \P\left(  \sca{\ell}{I_n-\bar{f}_n} - \frac{\expp{-3K}}{4}
  \norm{\ell}^2_{L^2 (h)}\geq \frac{\beta u}{ n} \right), 
 \end{align*}
 where for the second inequality we  set $a= 4\exp(2L+3K)$. This proves
 \reff{eq:conc_ineq_spec}, thus \reff{eq:unif_exp_bound}.

\subsubsection*{Second step: Upper bound for the bias term $B_n(\gS-g_{k^*})$}
 We set $\ell_*=\gS-g_{k^*}$ and we have $\norm{\ell_*}_{2,r}\leq
 2K/\cc_r$. Let $(a_k)_{k\in \Z}$ be the corresponding Fourier
 coefficients, which are real as  $\ell_*$ is even. 
We decompose the  the bias term as follows:
\begin{equation} \label{eq:B_n_alt}
B_n(\ell_*) =  \sca{\bar{f}_n-f}{\ell_*} =
\sca{\bar{f}_{n,1}-f}{\ell_*}-  \sca{\bar{f}_{n,2}}{\ell_*}, 
\end{equation}
with $\bar{f}_{n,1}, \bar{f}_{n,2}$ given by, for $x \in [-\pi,\pi]$:
\[ 
\bar{f}_{n,1}(x)= \sum_{|j|<n} \frac{\gamma_j}{2\pi} \expp{ijx}
\quad \text{and} \quad 
\bar{f}_{n,2}(x)=  \inv{n} \sum_{|j|<n} \frac{|j|\gamma_j}{2\pi} \expp{ijx}. 
\]
For the first term of the right hand side of \reff{eq:B_n_alt} notice that:
\[
\bar{f}_{n,1}(x)-f(x) = - \sum_{|j|\geq n} \frac{\gamma_j}{2\pi}\expp{ijx}.
\]
We                              deduce                              that
$\sca{\bar{f}_{n,1}-f}{\ell_*}   =   \sca{\bar{f}_{n,1}-f}{\bar\ell_*}$,
with  $\bar  \ell_*= \sum_{|j|\geq  n}  a_j  \expp{ijx}$. Then,  by  the
Cauchy-Schwarz inequality, we get:
\[
\val{\sca{\bar{f}_{n,1}-f}{\bar \ell_*}} \leq
\norm{\bar{f}_{n,1}-f}_{L^2(h)} \norm{\bar\ell_*}_{L^2(h)}. 
\]
Thanks to Lemma \ref{lem:reg}, we get:
\[
\norm{\bar{f}_{n,1}-f}_{L^2(h)}^2  = \sum_{|j| \geq n}^\infty
\frac{\gamma_j^2}{4\pi^2}
 \leq \sum_{|j| \geq n}^\infty \frac{|j|^{2r}}{ n^{2r}
 }\frac{\gamma_j^2}{4 \pi^2}  \leq 
\inv{n^{2r}} \{f\}_{2,r}^2 \leq 
\inv{n^{2r}} \norm{f}_{2,r}^2 \leq 
\frac{C(r,L)^2}{n^{2r}} \cdot
\]
This gives $\norm{\bar{f}_{n,1}-f}_{L^2(h)}\leq C(r,L) n^{-r}$. 
Similarly, we have $\norm{\bar\ell_*}_{L^2(h)} \leq  {n^{-r}} \{\ell_*\}_{2,r}\leq  {n^{-r}} \norm{\ell_*}_{2,r} \leq 
 2 K n^{-r}/\cc_r$. 
We deduce that:
\begin{equation} \label{eq:bias_bound_1}
\val{\sca{\bar{f}_{n,1}-f}{\bar \ell_*}} \leq  \frac{2KC(r,L)}{\cc_r}\,  n^{-2r}.
\end{equation}

For the second term on the right hand side of \reff{eq:B_n_alt}, we have:
\[
\sca{\bar{f}_{n,2}}{\ell_*} = 
\inv{ n}  \sum_{ |j|<n}  \frac{|j|\gamma_j}{2\pi}  a_j.
\]
Using the Cauchy-Schwarz inequality and then Lemma \ref{lem:reg}, we get as $r > 1/2$:
\begin{equation} 
\label{eq:bias_bound_2}
 \val{\sca{\bar{f}_{n,2}}{\ell_*}}
\leq \inv{n}   \{\ell_*\}_{2,1/2}  \{f\}_{2,1/2}
\leq \inv{n}   \norm{\ell_*}_{2,r}  \norm{f}_{2,r}
\leq \frac{2KC(r,L)}{\cc_r} n^{-1}.
\end{equation}
Therefore combining \reff{eq:bias_bound_1} and \reff{eq:bias_bound_2},
we obtain the following upper bound for the bias: 
 \begin{equation} \label{eq:bias_bound}
 \val{B_n(\ell_*)} \leq \frac{4KC(r,L)}{\cc_r} n^{-1}.
 \end{equation}

 \subsubsection*{Third  step: Conclusion}  Use 
 \reff{eq:unif_exp_bound}       and        \reff{eq:bias_bound}    in 
 \reff{eq:theo_spectral} to get the result.
\end{proof}

    \section{Lower bounds} \label{sec:lower}
    
    In this section we show that the aggregation procedure given in Section \ref{sec:aggr_method} is optimal by giving a lower bound corresponding to the upper bound of Theorem \ref{theo:aggreg_density} and \ref{theo:aggreg_spectral} for the estimation of the probability density function as well as for the spectral density.
    
\subsection{Probability density estimation} \label{sec:lower_D}
In this section we suppose that the reference density is the uniform
distribution on $[0,1]^d$: $h=\ind_{[0,1]^d}$. 

\begin{rem}
  If the reference density is not the uniform distribution on $[0,1]^d$,
  then    we   can    apply   the    Rosenblatt   transformation,    see
  \cite{rosenblatt1952}, to reduce the problem to this latter case. More
  precisely, according to \cite{rosenblatt1952},  if the random variable
  $Z$ has  probability density $h$, then  there exists two maps  $T$ and
  $T^{-1}$   such   that   $U=T(Z)$   is  uniform   on   $[0,1]^d$   and
  a.s.  $Z=T^{-1}(U)$.  Then if  the  random  variable $X$  has  density
  $f=\exp   (g)\,    h$,   we    deduce   that   $T(X)$    has   density
  $f^T=\exp ( g\circ  T^{-1}) \ind_{[0,1]^d}$. Furthermore, if  $f_1$ and
  $f_2$ are two  densities (with respect to the  reference density $h$),
  then we have $\kl{f_1}{f_2}=\kl{f_1^T}{f_2^T}$.
\end{rem}

We  give  the  main  result  of this  Section.  Let  $\P_f$  denote  the
probability measure when  $X_1, \hdots  , X_n$ are  i.i.d. random  variable with
density $f$.

\begin{prop} \label{prop:aggreg_low} 
Let  $N \geq 2$, $L>0$. Then there  exist $N$ probability densities  $(f_k, 1 \leq k  \leq N)$,
 with $f_k \in \cf^D(L)$ such that for all  $n  \geq 1$, $x \in \R^+$ satisfying:
\begin{equation}\label{eq:low_bound_cond}
\frac{\log(N)+x}{n} < 3 \left(1-\expp{-L}\right)^2,
\end{equation}
 we have:
 \[
 \inf_{\hat{f}_n} \sup_{f \in \cf^D(L)} \P_f\left( \kl{f}{\hat{f}_n} -
   \min_{1 \leq k \leq N} \kl{f}{f_k} \geq
   \frac{\beta'\left(\log(N)+x\right)}{n} \right) \geq \inv{24}
 \expp{-x}, 
\]
with  the infimum  taken over  all estimators  $\hat{f}_n$ based  on the
  sample $X_1,  \hdots,X_n$,  and $\beta' = 2^{-17/2}/3$.
 \end{prop}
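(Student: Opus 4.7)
The strategy is the classical reduction of a minimax estimation bound to a multiple-hypothesis testing lower bound, combined with an \emph{exponential} change-of-measure argument (rather than a Fano-type bound) in order to obtain the deviation term $\expp{-x}$. First, I would construct $N$ candidate densities as follows. Partition $[0,1]^d$ into $N$ disjoint Borel sets $A_1,\dots,A_N$ of equal Lebesgue measure $1/N$, pick a parameter $\epsilon\in(0,1-\expp{-L})$, and set
\[
f_k(x)=h(x)\bigl(1+\epsilon\,\phi_k(x)\bigr),\qquad \phi_k=\ind_{A_k}-\tfrac{1}{N-1}\ind_{A_k^c},
\]
so that $\int f_k=1$ and $f_k$ stays bounded away from $0$. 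I calibrate $\epsilon^2$ to be of order $(\log N+x)/n$; the assumption \reff{eq:low_bound_cond} gives exactly the room to pick such an $\epsilon$ while preserving $1-\epsilon\geq\expp{-L}$, which is what makes $f_k\in\cf^D(L)$ (one must verify $\norm{t_{f_k}}_\infty\leq L$, not just $\norm{g_{f_k}}_\infty\leq L$).

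Second, I would control the pairwise divergences. A second-order expansion of the logarithm, together with $\int\phi_k h=0$, gives $\kl{f_j}{f_k}\leq c_L\,\epsilon^2$, so that $\kl{\P_{f_j}^{\otimes n}}{\P_{f_k}^{\otimes n}}=n\,\kl{f_j}{f_k}$ is bounded by a constant times $\log N+x$, which is exactly the critical scale for testing lower bounds. At the same time, the hypotheses are well separated: for $j\neq k$, the squared Hellinger distance satisfies $H^2(f_j,f_k)\geq c'_L\,\epsilon^2$, since $\phi_j-\phi_k$ has $L^2(h)$-norm bounded below.

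Third, from any candidate estimator $\hat f_n$ I would build a selector $\hat k(X)=\argmin_{1\leq j\leq N} H(\hat f_n,f_j)$. Suppose, for contradiction, that $\hat f_n$ satisfied the oracle inequality of the proposition with probability at least $1-\expp{-x}/24$ under every $\P_{f_k}$. Applied with $f=f_k$ (for which $\min_j\kl{f_k}{f_j}=0$), this would give $\kl{f_k}{\hat f_n}<\beta'(\log N+x)/n$ on a set of probability $1-\expp{-x}/24$. Pinsker-type inequality $H^2\leq \kl{\cdot}{\cdot}/2$ would then place $\hat f_n$ in a Hellinger ball around $f_k$ of radius less than half the separation between hypotheses (provided $\beta'$ is small enough), forcing $\hat k=k$ on that event. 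Hence $\hat k$ would be a test of the $N$ hypotheses with error probability strictly less than $\expp{-x}/24$ under each $\P_{f_k}$.

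Finally, I would derive a contradiction through an exponential testing lower bound: for $N\geq 2$ product measures $\P_{f_k}^{\otimes n}$ with mutual KL divergence bounded by $C_0(\log N+x)$, any test $\hat k$ satisfies $\max_k\P_{f_k}^{\otimes n}(\hat k\neq k)\geq c\expp{-x}$. This follows from a direct likelihood-ratio change of measure: for a threshold $\tau\asymp x$, one writes $\P_{f_k}^{\otimes n}(A)\geq\expp{-\tau}\,\P_{f_1}^{\otimes n}(A\cap\{\log L_k\geq -\tau\})$ with $L_k=d\P_{f_k}^{\otimes n}/d\P_{f_1}^{\otimes n}$, and Markov's inequality on $\log L_k$ under $\P_{f_1}^{\otimes n}$ (whose mean is $-\kl{\P_{f_1}^{\otimes n}}{\P_{f_k}^{\otimes n}}$) controls the tail. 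The main obstacle will be this last step: obtaining an \emph{exponential} lower bound $\expp{-x}/24$ rather than a Fano-type polynomial bound requires a careful calibration of the threshold $\tau$, of the perturbation size $\epsilon$, and of the Hellinger-KL conversions. These choices feed back into the numerical constants $3(1-\expp{-L})^2$ of \reff{eq:low_bound_cond} and $\beta'=2^{-17/2}/3$; a secondary bookkeeping difficulty is checking the $\cf^D(L)$ membership with these sharp constants, which involves controlling $\norm{t_{f_k}}_\infty$ (and not only $\norm{g_{f_k}}_\infty$).
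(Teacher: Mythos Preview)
Your reduction from the minimax bound to a testing problem, and your plan to use a likelihood-ratio change of measure to get the exponential $\expp{-x}$ (this is essentially Corollary~5.1 in \cite{bellec2014optimal}, which the paper invokes), are both correct in spirit. The gap is in the construction of the hypotheses.

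With $\phi_k=\ind_{A_k}-\tfrac{1}{N-1}\ind_{A_k^c}$ on an $N$-piece partition, the difference $\phi_j-\phi_k$ is supported on $A_j\cup A_k$ only, with value $\pm N/(N-1)$ there. Hence
\[
\norm{\phi_j-\phi_k}_{L^2(h)}^2=\frac{2}{N}\left(\frac{N}{N-1}\right)^2=\frac{2N}{(N-1)^2}\asymp\frac{1}{N},
\]
so $H^2(f_j,f_k)\asymp \epsilon^2/N$, not $\epsilon^2$ as you claim. The same computation gives $\kl{f_j}{f_k}\asymp \epsilon^2/N$. To make the separation of order $(\log N+x)/n$ you would then need $\epsilon^2\asymp N(\log N+x)/n$, and the membership $f_k\in\cf^D(L)$ (which forces $\epsilon<1-\expp{-L}$) would only hold under the much stronger assumption $N(\log N+x)/n\leq c_L$, not under \reff{eq:low_bound_cond}. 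In short, $N$ ``one-bump'' densities are too close to one another.

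The paper avoids this by using the hypercube construction of Lecu\'e: take $D\asymp\log N$ sub-intervals of $[0,1]$, perturb each by $\pm T/D$ with $T/D=\sqrt{(\log N+x)/(3n)}$, and then use the Varshamov--Gilbert lemma to extract $N$ vertices of $\{0,1\}^D$ at pairwise Hamming distance $\geq D/8$. This yields densities $f_k$ with $H^2(f_i,f_j)\gtrsim T^2/D^2=(\log N+x)/(3n)$ and $\kl{f_i^{\otimes n}}{f_1^{\otimes n}}\leq nT^2/D^2=(\log N+x)/3$, while $\norm{f_k-1}_\infty=T/D$ is small uniformly in $N$, so $f_k\in\cf^D(L)$ under exactly \reff{eq:low_bound_cond}. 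After this construction, the testing step and the $\expp{-x}$ bound go through as you outlined, and the constant $\beta'=2^{-17/2}/3$ comes from the factor $8^{-3/2}/4$ in the Hellinger separation combined with the choice of $T$.
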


In the following proof, we shall  use the  Hellinger distance  which is
defined as follows. For two non-negative integrable functions $p$ and $q$,
 the Hellinger distance $H(p,q)$ is defined as:
 \[
H(p,q)=\sqrt{\int \left( \sqrt{p}-\sqrt{q} \right)^2 }. 
\]
A well  known property of  this distance is  that its square  is smaller
then the Kullback-Leibler divergence defined by \ref{eq:KL}, that is  for all
non-negative integrable functions $p$ and $q$, we have:
\[
H^2(p,q) \leq \kl{p}{q}.   
\]

\begin{proof}
Since the probability densities $(f_k, 1 \leq k \leq N)$ belongs to $\cf^D(L)$, we have:
\begin{multline*}
  \inf_{\hat{f}_n} \sup_{f \in \cf^D(L)} \P_f\left( \kl{f}{\hat{f}_n} -
  \min_{1 \leq k \leq N} \kl{f}{f_k} \geq 
  \frac{\beta'\left(\log(N)+x\right)}{n} \right)  \\ 
\begin{aligned}
& \geq  \inf_{\hat{f}_n} \max_{1\leq k \leq N} \P_{f_k}\left(
  \kl{f_k}{\hat{f}_n} \geq   \frac{\beta'\left(\log(N)+x\right)}{n} \right)
\\  
& \geq  \inf_{\hat{f}_n} \max_{1\leq k \leq N} \P_{f_k}\left(
  H^2(f_k,\hat{f}_n) \geq  \frac{\beta'\left(\log(N)+x\right)}{n} \right). 
\end{aligned} 
\end{multline*}

For the choice of  $(f_k, 1 \leq k \leq N)$, we  follow the choice given
in  the proof  of Theorem  2 of  \cite{lecue2006lower}. Let  $D$ be  the
smallest   positive   integer   such   that   $2^{D/8}   \geq   N$   and
$\Delta=\{0,1\}^D$.   For   $0 \leq  j   \leq  D-1$,  $s  \in   \R$,  we
set:
\[
\alpha_j(s)=\frac{T}{D}\ind_{(0,\inv{2}]}(Ds- j)-\frac{T}{D}\ind_{(\inv{2},1]}(Ds-j),
\]
where  $T$   verifies  $0  <   T  \leq  D(1-\expp{-L})$.  Notice the
support of the function $\alpha_j$ is $(j/D, (j+1)/D]$. Then   for  any
$\delta=(\delta_1,\hdots, \delta_D) \in \Delta$, the function $f^\delta$
defined by:
\[
        f^\delta(y)=1+\sum_{j=0}^{D-1} \delta_j \alpha_j(y_1), \quad
        y=(y_1,\hdots,y_d) \in [0,1]^d, 
\]
is     a    probability     density      function     with
$\expp{L} \geq 1+T/D \geq f \geq  1-T/D \geq \expp{-L}$. This implies that
$f^\delta  \in  \cf^D(L)$.  As  shown  in  the  proof  of Theorem  2  in
\cite{lecue2006lower},   there   exists    $N$   probability   densities
$(f_k,1 \leq k  \leq N)$ amongst $\{f^\delta, \delta  \in \Delta\}$ such
that for any $i\neq j$, we have:
\[
        H^2(f_i,f_j) \geq \frac{8^{-3/2}T^2}{4D^2},
\]
and $f_1$ can be chosen to be the density of the uniform distribution on
$[0,1]^d$.  Recall  the  notation  $p^{\otimes n}$  of  the  $n$-product
probability density  corresponding to the probability  density $p$. Then
we also have  (see the proof of 
Theorem 2 of \cite{lecue2006lower})   for all $1 \leq i \leq N$:
     \[
       \kl{f^{\otimes n}_i}{f^{\otimes n}_1} \leq \frac{nT^2}{D^2}\cdot
     \]
     Let us take $T=D \sqrt{(\log(N)+x)/3n}$, so that with condition
     \reff{eq:low_bound_cond} we indeed have $T \leq D(1-\expp{-L})$. With
     this choice, and the defintion of $\beta'$, we have for $1 \leq i \neq j \leq N$ 
     \[
       H^2(f_i,f_j) \geq  4\frac{\beta' \left( \log(N)+x\right)}{n}
\quad\text{and}\quad 
      \kl{f^{\otimes n}_i}{f^{\otimes n}_1} \leq \frac{\log(N)+x}{3} \cdot
     \]
     Now we apply Corollary 5.1 of \cite{bellec2014optimal} with $m=N-1$
     and with the squared Hellinger distance instead of the  $L^2$
     distance to get that  for any estimator $\hat{f}_n$:
\[
  \max_{1\leq k \leq N} \P_{f_k}\left( H^2(f_k,\hat{f}_n) \geq
    \frac{\beta'\left(\log(N)+x\right)}{n} \right) 
\geq \inv{12}\min\left(1,(N-1)\expp{- (\log(N)+x)}\right) 
\geq \inv{24} \expp{-x}.
\]
This  concludes the proof.     
\end{proof}

\subsection{Spectral density estimation}
    
In this section we give a lower bound for aggregation of spectral density estimators.       
  Let $\P_f$  denote  the
probability measure when $(X_n)_{n\in \Z}$ is  a centered Gaussian sequence with
spectral  density  $f$.  Recall  the   set  of  positive  even  function
$\cf_r^S(L) \subset \cg$ defined by \reff{eq:def-FS} for $r \in \R$.

\begin{prop} \label{prop:spectr_low} 
       Let $N \geq 2$, $r>1/2$, $L>0$. There exist a constant $C(r,L)$ and $N$ spectral densities $(f_k, 1 \leq k \leq N)$
      belonging to $\cf_r^S(L)$ such that for all $n \geq 1$, $x \in \R^+$ satisfying:
      \begin{equation}\label{eq:low_bound_cond_spec}
         \frac{\log(N)+x}{n} < \frac{C(r,L)}{\log(N)^{2 r}}
      \end{equation}
      we have:
      \begin{equation} \label{eq:lower_spec_est}
        \inf_{\hat{f}_n} \sup_{f \in \cf_r^S(L)} \P_f\left(
          \kl{f}{\hat{f}_n} - \min_{1 \leq k \leq N} \kl{f}{f_k} \geq
          \frac{\beta' \left(\log(N)+x\right)}{n} \right) \geq \inv{24}
        \expp{-x}, 
      \end{equation}
with the infimum taken over all estimators $\hat{f}_n$ based on the
sample sequence $X=(X_1, \hdots, X_n)$, and $\beta' = 8^{-5/2}/3$.
\end{prop}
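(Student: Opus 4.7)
The plan is to mirror the strategy used for Proposition \ref{prop:aggreg_low}: construct a rich enough finite family of spectral densities inside $\cf_r^S(L)$, control pairwise separation in Hellinger distance, control the pairwise Kullback--Leibler divergence of the joint Gaussian laws, and then invoke Corollary 5.1 of \cite{bellec2014optimal} (with the squared Hellinger distance in place of $L^2$) together with $H^2(\cdot,\cdot)\leq \kl{\cdot}{\cdot}$.

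The construction has to respect two constraints absent from Proposition \ref{prop:aggreg_low}: $g_f$ must be even and must lie in a Sobolev ball of $W_r$. I would therefore fix once and for all a smooth, compactly supported bump $\phi$ and, for $D$ the smallest integer with $2^{D/8}\geq N$, introduce $D$ dilated/translated even bumps $\phi_j$ with pairwise disjoint supports symmetric about $0$ in $[-\pi,\pi]$. For $\delta\in\{0,1\}^D$ set
\[
g^\delta = T\sum_{j=1}^D \delta_j \phi_j, \qquad f^\delta = \expp{g^\delta}\, h,
\]
with amplitude $T>0$ to be chosen. A rescaling computation gives $\{\phi_j\}_{2,r}^2\lesssim D^{2r-1}$, so $\norm{g^\delta}_{2,r}^2\lesssim T^2 D^{2r}$; the constraint $f^\delta\in \cf_r^S(L)$ then reads $T\leq C_1(r,L)\, D^{-r}$, which is exactly where the factor $\log(N)^{2r}$ in \reff{eq:low_bound_cond_spec} originates once one substitutes $D\sim \log N$. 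A Varshamov--Gilbert selection in $\{0,1\}^D$ retains $N$ codewords with pairwise Hamming distance $\geq D/8$, providing the family $(f_k,1\le k\le N)$.

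Two quantitative estimates remain. First, a Hellinger lower bound: since $\norm{g^\delta}_\infty\leq L$, the two-sided inequality $H^2(e^{g_1}h,e^{g_2}h)\asymp \norm{g_1-g_2}_{L^2(h)}^2$ combined with the near-orthogonality of the $\phi_j$ and Varshamov--Gilbert give $H^2(f_i,f_j)\geq c(L)\, T^2$. Second, a Kullback--Leibler upper bound on the joint Gaussian laws: using $\kl{\mathcal N(0,\Sigma_i)}{\mathcal N(0,\Sigma_1)}=\tfrac12\bigl[\tr(\Sigma_1^{-1}\Sigma_i-I)-\log\det(\Sigma_1^{-1}\Sigma_i)\bigr]$ with $\Sigma_k=2\pi T_n(f_k)$, the spectral bounds $\expp{-L}\leq 2\pi f_k\leq \expp{L}$, and the Toeplitz estimates of the Appendix (in particular $\rho(T_n(\ell))\leq \norm{\ell}_\infty$ and the trace bound $\tr\bigl(T_n(\ell)^2\bigr)\leq n\norm{\ell}_{L^2(h)}^2$), a second-order Taylor expansion of the function $x\mapsto x-1-\log x$ around $1$ applied to $\Sigma_1^{-1}\Sigma_i$ yields
\[
\kl{f_i^{\otimes n}}{f_1^{\otimes n}}\leq C_2(L)\, n\, \norm{g^{(i)}-g^{(1)}}_{L^2(h)}^2 \leq C_3(L)\, n\, T^2.
\]
This is the main technical step.

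Finally I would choose $T^2 = (\log N+x)/(C_3(L)\, n)$; condition \reff{eq:low_bound_cond_spec} guarantees $T\leq C_1(r,L)/D^r$, so the family still sits in $\cf_r^S(L)$. Then $H^2(f_i,f_j)\geq c(L)T^2\geq \beta'(\log N+x)/n$ for an explicit $\beta'$ while $\kl{f_i^{\otimes n}}{f_1^{\otimes n}}\leq (\log N+x)/3$. Corollary 5.1 of \cite{bellec2014optimal} applied with $m=N-1$ and the squared Hellinger distance yields the claimed lower bound \reff{eq:lower_spec_est}. The main obstacle in this program is the dependence-aware Kullback--Leibler control in Step~5: the bound must be sharp at the scale $nT^2$ and must only use the boundedness and Sobolev regularity of $g$, without hidden dependence on $n$.
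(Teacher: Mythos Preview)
Your proposal is correct and follows the same high-level architecture as the paper (hypercube construction with $D\sim\log N$, Varshamov--Gilbert extraction, Hellinger separation, Kullback--Leibler control of the joint Gaussian laws, then Corollary 5.1 of \cite{bellec2014optimal}), but with a genuinely different technical construction. The paper perturbs $f$ \emph{additively} by mean-zero smooth bumps, i.e.\ $2\pi f_s^\delta=1+s\sum_j\delta_j\bar\alpha_j$ with $\int\bar\alpha_j=0$; this forces $\int f_s^\delta=1$, so with the reference $f^0=h$ one has $\Sigma_1=\ci_n$ and $\tr{\Sigma_i}=n$, and the Kullback--Leibler between the Gaussian laws collapses to $-\tfrac12\log\det\Sigma_i$, bounded directly by Lemma~\ref{lem:Tn_logdet}. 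The price is that the Sobolev control of $g^\delta=\log(2\pi f_s^\delta)$ is indirect and goes through Fa\`a di Bruno plus an interpolation for non-integer $r$. Your route perturbs $g$ additively instead, which makes the Sobolev bound $\norm{g^\delta}_{2,r}^2\lesssim T^2 D^{2r}$ essentially immediate, at the cost of losing $\int f^\delta=1$ and having to control the full quantity $\sum_k(\mu_k-1-\log\mu_k)$ via the quadratic bound $\mu-1-\log\mu\leq(\mu-1)^2$ and $\tr{(T_n(2\pi f_i-1))^2}\leq n\norm{2\pi f_i-1}_{L^2(h)}^2$. Both trades are legitimate and yield the same rate.

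Two small points to clean up. First, the notation $f_i^{\otimes n}$ is misleading here: the sample is a dependent Gaussian vector, not an i.i.d.\ product, so write $\kl{\rP_{f_i}}{\rP_{f_1}}$ for the joint laws (you clearly use the right formula, so this is purely notational). Second, your constants $c(L)$, $C_2(L)$, $C_3(L)$ need not depend on $L$: since you may take $\delta^1=0$ so that $f_1=h$ and $\Sigma_1=\ci_n$, and since $\norm{g^\delta}_\infty\leq T\norm{\phi}_\infty$ is small under \reff{eq:low_bound_cond_spec}, the eigenvalue window and the Lipschitz constants in both the Hellinger and KL estimates are governed by $T$, not by $L$. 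Tracking this gives an absolute $\beta'$ (possibly different from the paper's $8^{-5/2}/3$, which comes from their specific bump and normalisation). Finally, for non-integer $r$ the additivity of $\{\cdot\}_{2,r}^2$ over disjoint-support bumps is not literal; the cleanest fix is the same H\"older/interpolation step the paper uses between $\lfloor r\rfloor$ and $\lceil r\rceil$.
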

    
\begin{proof}
Similarly to the proof of Proposition \ref{prop:aggreg_low}, the left hand side of \reff{eq:lower_spec_est} is greater than: 
\[
       \inf_{\hat{f}_n} \max_{1\leq k \leq N} \P_{f_k}\left(
         H^2(f_k,\hat{f}_n) \geq  \frac{\beta'\left(\log(N)+x\right)}{n}
       \right). 
\]

We shall choose a set of spectral densities $(f_k, 1 \leq k \leq N)$
similarly as  in the  proof of  Proposition \ref{prop:aggreg_low}  such that
$f_k  \in  \cf_r^S(L)$. Let   us   define
$\phi:  [0,\pi]  \rightarrow \R$ as,  for
$x\in [0,\pi]$:
\[
       \phi(x)=  \zeta(x) \ind_{\left[0,\pi/2 \right]}(x) - 
       \zeta(x) \ind_{\left[\pi/2,\pi\right]}(x)
\quad\text{with}\quad  \zeta(x) = \expp{-1/{x\left(\frac{\pi}{2}-x\right)}}.
    \]
We have  that  $\phi \in C^\infty(\R)$ and:
\begin{equation} \label{eq:phi_prop}
\norm{\phi}_\infty =  \expp{-16/\pi^2},  
\quad  \int_0^\pi \phi = 0. 
\end{equation}
 Let  $D$  be  the  smallest integer  such  that
$2^{D/8}    \geq    N$    and   $\Delta=\{0,1\}^D$.     For  
$1 \leq j  \leq D$, $x \in [0,\pi]$, let  $\bar{\alpha}_j(x)$ be defined
as:
\[
\bar{\alpha}_j(x)=\phi(Dx-(j-1)\pi),
\]
and for  any $\delta=(\delta_1,\hdots, \delta_D) \in \Delta$ and $s\geq 0$, let the
function $f^\delta_s$ be defined by:
\begin{equation} \label{eq:def_f_delta}
        2\pi\, f^\delta_s (y)=1 +s\sum_{j=1}^D \delta_j
          \bar{\alpha}_j(\val{y}) , \quad y \in [-\pi,\pi].
\end{equation}
Since $\int_0^\pi \phi =0$, we get:
\begin{equation}
   \label{eq:majo-fd}
\inv{2\pi} \int_{-\pi}^\pi f_s^\delta(x) \, dx =1 \quad \text{and} \quad 1- s\norm{\varphi}_\infty \leq  2\pi f_s^\delta\leq 1+s\norm{\varphi}_\infty .
\end{equation}
We assume that $s \in [0,1/2]$, so that $2\pi f_s^\delta \geq 1/2$. Let us denote $g^\delta_s  = g_{f^\delta_s}=\log(2\pi f^\delta_s)$. 
We first give upper bounds for $\norm{(g^\delta_s)^{(p)}}_{L^2(h)}$ with $p \in \N$. 

For $p=0$, we have by \reff{eq:majo-fd} :
\begin{equation} \label{eq:gds_L2_up}
  \norm{g^\delta_s}_{L^2(h)} \leq \log\left(\inv{1-s\norm{\phi}_\infty}\right) \leq \frac{s\norm{\phi}_\infty}{1-s\norm{\phi}_\infty} \leq 2s.
\end{equation}

For $p \geq 1$, we get by Fa\`{a} di Bruno's formula that:
\begin{equation} \label{eq:faa_di_bruno}
  \norm{(g^\delta_s)^{(p)}}_{L^2(h)} = \left\| \sum_{k \in \ck_p} \frac{p!}{k_1!k_2! \hdots k_p!} \frac{(-1)^{\bar{k}+1} \bar{k}!}{(2\pi f^\delta_s)^{\bar{k}}}  \prod_{\ell=1}^p \left(\frac{(2 \pi f^\delta_s)^{(\ell)}}{\ell!} \right)^{k_\ell}   \right\|_{L^2(h)},
\end{equation}
with $\ck_p = \{ k=(k_1, \hdots, k_p) \in \N^p; \sum_{\ell=1}^p \ell k_\ell = p \}$ and $\bar{k} = \sum_{\ell=1}^p k_\ell$.  The $\ell$-th derivative of $2\pi f^\delta_s$ is given by, for $y \in [0,\pi]$:
\[
  (2\pi f^\delta_s(y))^{(\ell)} = s D^\ell \sum_{j=1 }^D \delta_j \phi^{(\ell)}(Dy-(j-1)\pi).
\]
Therefore we have the following bound for this derivative:
\begin{equation*} \label{eq:up_bound_fds_der}
   \norm{(2\pi f^\delta_s(y))^{(\ell)}}_\infty \leq s D^\ell \norm{\phi^{(\ell)}}_\infty. 
\end{equation*}
From $\phi \in C^\infty(\R)$, we deduce that $\norm{\phi^{(\ell)}}_\infty$ is finite for all $\ell \in \N^*$. Since $s \in [0,1/2]$ and  $2\pi f^\delta_s \geq 1-s\norm{\phi}_\infty \geq 1/2$, there exists a constant $\bar{C}_{p}$ depending on $p$ (and not depending on $N$), such that :
\begin{equation} \label{eq:gds_(p)_up}
\norm{(g^\delta_s)^{(p)}}_{L^2(h)} \leq s \bar{C}_p D^p  \leq s \bar{C}_p \frac{16^p}{\log(2)^p} \log(N)^p.
\end{equation}

In order to have $f^\delta_s    \in     \cf_r^S(L)$,
we need to ensure that $\norm{g^\delta_s}_{2,r} \leq L/\cc_r$. For $r\in \N^*$, we have:
\[
   \norm{g^\delta_s}_{2,r} = \sqrt{\norm{g^\delta_s}^2_{L^2(h)}+ \norm{(g^\delta_s)^{(r)}}^2_{L^2(h)}}.
\]
Therefore if $s \in [0,s_{r,L}]$ with $s_{r,L} \in [0,1/2]$ given by:
\[
   s_{r,L} = \log(N)^{-r} \bar{C}_{r,L}, \quad \text{ with } \quad \bar{C}_{r,L} = \min\left(\frac{\log(2)^r}{2}, \frac{\log(2)^r L}{\sqrt{8}\cc_r}, \frac{\log(2)^r L}{ \sqrt{2}  \cc_r 16^r \bar{C}_r} \right),
\]
then by \reff{eq:gds_L2_up} and \reff{eq:gds_(p)_up} we get:
\[
   \norm{g^\delta_s}_{2,r} \leq \sqrt{ \frac{L^2}{2\cc_r^2}  + \frac{L^2}{2\cc_r^2}} = \frac{L}{\cc_r} \cdot
\]

Let $\lceil r \rceil$  and $\lfloor r \rfloor$ denote 
the unique integers such that $\lceil r \rceil -1 < r \leq \lceil r \rceil$ and 
$\lfloor r \rfloor  \leq r < \lfloor r \rfloor+1$. 
For $r \notin \N^*$, H\"{o}lder's inequality yields:
\begin{align*}
   \norm{g^\delta_s}_{2,r} & = \sqrt{ \norm{g^\delta_s}^2_{L^2(h)}+ \left\{g^\delta_s\right\}^2_{2,r}} \\
                           & \leq  \sqrt{ \norm{g^\delta_s}^2_{L^2(h)}+ \left\{g^\delta_s\right\}^{2(r-\lfloor r \rfloor)}_{2,\lceil r\rceil} \left\{g^\delta_s\right\}^{2(\lceil r \rceil -  r )}_{2,\lfloor r\rfloor}} \\
                           & =\sqrt{ \norm{g^\delta_s}^2_{L^2(h)}+ \norm{(g^\delta_s)^{(\lceil r\rceil)}}^{2(r-\lfloor r \rfloor)}_{L^2(h)} \norm{(g^\delta_s)^{(\lfloor r\rfloor)}}^{2(\lceil r \rceil -  r )}_{L^2(h)}}.
\end{align*}
Using \reff{eq:gds_(p)_up} and \reff{eq:gds_(p)_up} with $p=\lceil r \rceil$ and  $p=\lfloor r \rfloor$, we obtain:
\[
  \norm{(g^\delta_s)^{(\lceil r\rceil)}}^{2(r-\lfloor r \rfloor)}_{L^2(h)} \norm{(g^\delta_s)^{(\lfloor r\rfloor)}}^{2(\lceil r \rceil -  r )}_{L^2(h)} \leq s^2 \bar{C}^{2(r-\lfloor r \rfloor)}_{\lceil r\rceil} \bar{C}^{2(\lceil r \rceil -  r )}_{\lfloor r\rfloor} \frac{16^{2r}}{\log(2)^{2r}} \log{N}^{2r}. 
\]
Hence if $s \in [0,s_{r,L}]$ with $s_{r,L} \in [0,1/2]$ given by:
\[
     s_{r,L} = \log(N)^{-r} \bar{C}_{r,L}, \quad \text{ with } \quad \bar{C}_{r,L} = \min\left(\frac{\log(2)^r}{2}, \frac{\log(2)^r L}{\sqrt{8}\cc_r}, \frac{\log(2)^r L}{ \sqrt{2}  \cc_r 16^r \bar{C}^{r-\lfloor r \rfloor}_{\lceil r\rceil} \bar{C}^{\lceil r \rceil -  r }_{\lfloor r\rfloor}} \right),
\]
we also have $\norm{g^\delta_s}_{2,r} \leq L/\cc_r$, providing $f^\delta_s    \in     \cf_r^S(L)$.

Mimicking  the proof of Theorem 2 in \cite{lecue2006lower} and omitting
the details, we first 
obtain (see last inequality of p.975 in \cite{lecue2006lower}) that for
$\delta, \delta'\in \Delta$:
\[
 H^2\left(f^{\delta}_s,f^{\delta'}_s\right) \geq   8^{-3/2} 
 \frac{\sigma(\delta,\delta')}{D} \frac{2 }{\pi} s^2 \int_0 ^\pi \varphi^2,
\]
with $\sigma(\delta,\delta')$  the Hamming distance between
$\delta$ and $\delta'$, and then deduce that there   exist  
$(\delta^{k},1 \leq k  \leq N)$ in $\Delta$ with $\delta^1=0$ such
that for any   $1 \leq i\neq j \leq N$ and $s\in [0, s_{r,L}]$, we have (see first
inequality of p.976 in \cite{lecue2006lower}):
\[
        H^2(f_s^{\delta^i},f_s^{\delta^j}) \geq \frac{2 \cdot 8^{-5/2}}{\pi} s^2 \int_0
        ^\pi \varphi^2.
\]
Notice  $f_s^{\delta^1}=f_s^{0}=h$ is  the density of the uniform distribution on
$[-\pi,\pi]$. 

With a  slight abuse  of notation,  let us denote  by $\rP_f$  the joint
probability    density    of     the    centered    Gaussian    sequence
$X=(X_1,\hdots,X_n)$ corresponding  to the spectral density  $f$. Assume
$X$  is standardized   (that is  $\Var(X_1)=1$),  which implies  $\int f=1$.   Let
$\Sigma_{n,f}$   denote   the   corresponding  covariance   matrix.    Since
$h=(1/2\pi)   \ind_{[-\pi,  \pi]}$,   we  have   $\Sigma_{n,h}=  \ci_n$   the
$n\times n$-dimensional identity  matrix. We compute:
\begin{align*}
        \kl{\rP_f}{\rP_h} 
& = \int_{\R^n} \rP_f(x) \log\left(\frac{\rP_{f}(x)}{\rP_h(x)} \right)
  \, dx \\
& = \int_{\R^n} \rP_f(x) \log\left( \inv{\sqrt{\det( \Sigma_{n,f}) }}
  \exp\left(-\inv{2} x^T \left(\Sigma_{n,f}^{-1} - \ci_n \right) x \right)
  \right) \, dx \\ 
& =  - \frac{1}{2}\log\left(\det( \Sigma_{n,f}) \right) - \inv{2}
  \rE_{f}\left[X^T \left(\Sigma_\delta^{-1} - \ci_n \right) X\right]. 
     \end{align*}
The expected value in the previous equality can be written as:
\[
        \rE_{f}\left[X^T \left(\Sigma_{n,f}^{-1} - \ci_n \right) X\right] 
= \tr{\left(\Sigma_{n,f}^{-1} - \ci_n \right)\rE_{f}[X^T X]} 
=\tr{\ci_n-\Sigma_{n,f}}=0,
\]
where for the last equality, we  used that the Gaussian random variables
are                 standardized.                 This                 yields
$\kl{\rP_f}{\rP_h}   =   -   \frac{1}{2}\log\left(\det(\Sigma_{n,f}) )
\right)$. 
We can use this  last equality for
$f=f^\delta_s$ since  $\int f^\delta_s=1$  thanks to  \reff{eq:phi_prop}, and
obtain:
\[
\kl{\rP_{f_s^\delta}}{\rP_{f_s^0}} =  - \frac{1}{2}\log\left(\det(
  \Sigma_{n,f_s^\delta}) \right).  
\]
Notice that for $s \in [0,s_{r,L}]$, we have $3/2 \geq 1+s \norm{\varphi}_\infty \geq  2 \pi f_s^\delta \geq  1 - s \norm{\varphi}_\infty \geq 1/2 $
thanks to \reff{eq:majo-fd} and \reff{eq:phi_prop}. Therefore we have:
\begin{equation} \label{eq:kl_fdelta}
\kl{\rP_{f_s^\delta}}{\rP_{f_s^0}}\leq \frac{n}{2} \norm{2\pi f_s^\delta-1}^2_{L^2(h)} \leq \frac{n}{2} \frac{s^2}{\pi} \int_0^\pi \phi^2,
\end{equation}
where we used $\Sigma_{n, f_s^\delta}=T_n(2\pi f_s^\delta)$ and Lemma \ref{lem:Tn_logdet} with $\ell=2\pi f_s^\delta$ for the first inequality, and \reff{eq:def_f_delta} for the second inequality.
We set:
\[
C(r,L) = \frac{3 \bar{C}^2_{r,L}\int_0^\pi \phi^2}{ 2  \pi } \quad \text{ and }  \quad s = \sqrt{\frac{2\pi}{3\int_0^\pi \phi^2}} \sqrt{\frac{\log(N)+x}{n}},
 \]
 so that \reff{eq:low_bound_cond_spec} holds for $s \in [0,s_{r,L}]$. We obtain for all $\delta^1,\delta^2 \in \bar{\Delta}$, $\delta \in \Delta$:
     \[
        H^2\left(f^{\delta_1}_s,f^{\delta_2}_s\right) \geq 4 \frac{\beta'(\log(N)+x)}{n} \quad \text{ and }  \quad \kl{\rP_{f_s^\delta}}{\rP_{f_s^0}} \leq \frac{\log(N)+x}{3} \cdot
     \]      
We conclude the proof as in the end of the proof of Proposition \ref{prop:aggreg_low}.

    \end{proof}

\section{Appendix}    
\subsection{Results on Toeplitz matrices}
\label{sec:toeplitz}

Let  $\ell\in L^1(h)$ be a real function with $h=1/(2\pi) \ind_{[-\pi,\pi]}$. We define the corresponding
Toeplitz matrix $T_n(\ell)$ of size $n\times n$  of its Fourier coefficients by:
\begin{equation}
   \label{eq:toeplitz}
[T_n(\ell)]_{j,k}=\inv{2\pi}\int_{-\pi}^\pi \ell(x) \expp{i(j-k)x} \, dx
 \quad \text{ for } 1 \leq j,k \leq n.
\end{equation}
Notice that $T_n(\ell)$ is Hermitian. It is also real if $\ell$ is
even. Recall that $\rho(A)$ denotes the spectral density of the matrix $A$.

\begin{lem} Let $\ell \in L^2(h)$ be a real function.
\begin{enumerate}
\item All the  eigenvalues of  $T_n(\ell)$ belong to $[\min \ell, \max
  \ell]$. In particular,  we have the following 
  upper bound on the spectral radius $\rho(T_n(\ell))$ of $T_n(\ell)$:
\begin{equation}
   \label{eq:upper-rho-T}
\rho(T_n(\ell))\leq  \norm{\ell}_\infty .
\end{equation}
\item For the trace of $T_n(\ell)$ and $T^2_n(\ell)$, we have: 
\begin{equation}
   \label{eq:Tope-tr2}
\tr{T_n(\ell)}=\frac{n}{2\pi} \int_{-\pi}^\pi \ell(x)  \, dx
\quad\text{and}\quad \tr{T^2_n(\ell)}  \leq n \norm{\ell}^2_{L^2(h)}.
\end{equation}

\end{enumerate}

\end{lem}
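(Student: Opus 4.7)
My plan is to prove both parts by reducing everything to Parseval-type identities for Fourier series on $[-\pi,\pi]$ with the reference measure $h = (2\pi)^{-1} \ind_{[-\pi,\pi]}$.

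First, for part (1), I would write the quadratic form associated to $T_n(\ell)$ in terms of a trigonometric polynomial. For $v=(v_1,\ldots,v_n) \in \C^n$, set $p_v(x)=\sum_{k=1}^n v_k \expp{-ikx}$. A direct computation using the definition \reff{eq:toeplitz} gives
\[
v^* T_n(\ell) v = \inv{2\pi} \int_{-\pi}^\pi \ell(x)\, |p_v(x)|^2 \, dx,
\]
and Parseval's identity yields $\inv{2\pi}\int_{-\pi}^\pi |p_v(x)|^2 \, dx = \norm{v}^2$. Since $\ell$ is real, this implies $(\min \ell) \norm{v}^2 \leq v^* T_n(\ell) v \leq (\max \ell) \norm{v}^2$, and the Rayleigh quotient characterization of eigenvalues for the Hermitian matrix $T_n(\ell)$ gives the spectrum inclusion in $[\min \ell, \max \ell]$. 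Since $|\min \ell|, |\max \ell| \leq \norm{\ell}_\infty$, this gives \reff{eq:upper-rho-T}.

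For part (2), the trace formula is immediate: all diagonal entries of $T_n(\ell)$ equal $\inv{2\pi}\int_{-\pi}^\pi \ell(x)\, dx$ by definition, so summing $n$ of them gives the first claim in \reff{eq:Tope-tr2}. For $\tr(T_n^2(\ell))$, let $(a_m)_{m\in \Z}$ denote the Fourier coefficients of $\ell$, so that $[T_n(\ell)]_{j,k}=a_{j-k}$. Since $\ell$ is real we have $a_{-m}=\overline{a_m}$, hence
\[
\tr(T_n^2(\ell)) = \sum_{j=1}^n \sum_{k=1}^n [T_n(\ell)]_{j,k}[T_n(\ell)]_{k,j}
= \sum_{j,k=1}^n |a_{j-k}|^2
= \sum_{m=-(n-1)}^{n-1} (n-|m|)\, |a_m|^2.
\]
Bounding $(n-|m|)\leq n$ and extending the sum to all $m\in \Z$ gives $\tr(T_n^2(\ell))\leq n\sum_{m\in \Z} |a_m|^2 = n\norm{\ell}^2_{L^2(h)}$ by Parseval, which is the second assertion of \reff{eq:Tope-tr2}.

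Neither step presents a real obstacle; the only mild care needed is keeping track of conjugates versus the realness/evenness of $\ell$ when identifying $[T_n(\ell)]_{k,j}$ with $\overline{a_{j-k}}$, and noting that the symmetry implies $T_n^2(\ell)$ is Hermitian positive semidefinite so the trace identity equals the sum of squared moduli of entries. If one wishes to avoid invoking evenness of $\ell$ anywhere, one can simply observe that the quadratic form argument and the Parseval identity in part (2) both rely only on $\ell \in L^2(h)$ being real-valued, which is what is assumed.
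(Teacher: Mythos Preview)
Your proof is correct. The paper itself does not actually prove this lemma: it simply refers to Grenander--Szeg\H{o} for part (1) and to Davies for the trace bound in part (2). Your argument---writing the quadratic form $v^* T_n(\ell) v$ as $\inv{2\pi}\int \ell\,|p_v|^2$ and invoking the Rayleigh quotient, then computing $\tr(T_n^2(\ell))$ as the Frobenius norm via the Fourier coefficients and bounding by Parseval---is precisely the classical computation found in those references, so in substance you have reproduced the same proof, only self-contained rather than cited.
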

\begin{proof}
For Property (1), see Equation (6) of Section 5.2
  in \cite{grenander1958toeplitz}. For Property (2), the first part is
  clear and for the second part, see Lemma 3.1 of
  \cite{davies1973asymptotic}. 
  
\end{proof}

We shall use the following elementary result.

\begin{lem} \label{lem:Tn_logdet}
Let $\ell\in L^2(h)$ such that $\int \ell h =1$ and $\ell(x) \in [1/2,3/2]$, then  we have:
\begin{equation}
   \label{eq:majo-det-T}
\log\left(\det(
  T_n(\ell)) \right)\geq -n \norm{\ell-1}^2_{L^2(h)}.
\end{equation}
\end{lem}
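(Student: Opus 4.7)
The plan is to pass to the eigenvalues of $T_n(\ell)$, apply a pointwise lower bound on $\log$, and then recognise the two resulting traces as the ones controlled by the preceding lemma.

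First, since $\ell(x)\in[1/2,3/2]$, Property (1) of the previous lemma implies that every eigenvalue $\mu_i$ of $T_n(\ell)$ lies in $[1/2,3/2]$. I will then invoke the elementary one-variable inequality
\[
\log(x)\geq (x-1)-(x-1)^2\quad\text{for }x\in[1/2,3/2],
\]
which follows from studying the sign of the derivative of $f(y)=\log(1+y)-y+y^2$ on $[-1/2,1/2]$: one checks $f'(y)=y(1+2y)/(1+y)$, so $f$ is decreasing on $[-1/2,0]$ and increasing on $[0,1/2]$, hence $f\geq f(0)=0$.

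Applying this inequality to each eigenvalue $\mu_i$ and summing gives
\[
\log\det(T_n(\ell))=\sum_{i=1}^n \log(\mu_i)\;\geq\;\sum_{i=1}^n(\mu_i-1)\;-\;\sum_{i=1}^n(\mu_i-1)^2
\;=\;\tr(T_n(\ell))-n-\tr\!\bigl((T_n(\ell)-I_n)^2\bigr).
\]
The assumption $\int \ell\,h=1$ together with the trace formula \reff{eq:Tope-tr2} yields $\tr(T_n(\ell))=n$, so the first two terms cancel. For the remaining term, linearity of $\ell\mapsto T_n(\ell)$ and $T_n(1)=I_n$ give $T_n(\ell)-I_n=T_n(\ell-1)$, and the second part of \reff{eq:Tope-tr2} applied to $\ell-1\in L^2(h)$ yields
\[
\tr\!\bigl((T_n(\ell)-I_n)^2\bigr)=\tr(T_n(\ell-1)^2)\leq n\,\norm{\ell-1}^2_{L^2(h)}.
\]
Combining the two estimates produces \reff{eq:majo-det-T}.

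The only genuinely non-routine point is the choice of the quadratic lower bound for $\log$: it must be sharp enough that after the cancellation $\tr(T_n(\ell))-n=0$ the remainder is exactly a quadratic form in $\ell-1$, and the coefficient must be at most $1$ so that the trace bound from \reff{eq:Tope-tr2} can be applied without an extra constant. The range $[1/2,3/2]$ is precisely what makes the coefficient $1$ admissible.
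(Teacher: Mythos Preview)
Your proof is correct and follows essentially the same approach as the paper: pass to eigenvalues (which lie in $[1/2,3/2]$ by Property~(1)), apply the pointwise inequality $\log(1+t)\geq t-t^2$ on $[-1/2,1/2]$, and then use $\tr(T_n(\ell))=n$ together with the bound $\tr(T_n^2(\ell-1))\leq n\norm{\ell-1}_{L^2(h)}^2$ from \reff{eq:Tope-tr2}. Your version is in fact slightly more explicit than the paper's, which compresses the cancellation $\sum(\nu_i-1)=0$ and the identification $T_n(\ell)-\ci_n=T_n(\ell-1)$ into a single equality.
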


\begin{proof}
Notice that by Property (1), the eigenvalues
  $(\nu_i, 1 \leq i \leq n)$ of $T_n(\ell)$ verify $\nu_i \in [1/2,3/2]$.
  For $t \in [-1/2,1/2]$, we have $\log(1+t)\geq t- t^2$, giving that:  
\[
\log\left(\det(
 T_n(\ell)) \right)
=\sum_{i=1}^n \log(\nu_i)
\geq \sum_{i=1}^n (\nu_i -1) -(\nu_i-1)^2
=  - \tr{T^2_n(\ell-1)}
 \geq -n\norm{\ell-1}^2_{L^2(h)},
\]
where we used that $T_n(\ell-1) = T_n(\ell)-\ci_n$ for the second equality and Property (2) for 
the second inequality. 
\end{proof}

\subsection{Proof of Lemma \ref{lem:reg}}
 \label{sec:preuve-exp-S}

The next Lemma is inspired by the work of \cite{DiNezza2012521} 
on fractional Sobolev spaces. 
For $r\in (0,1)$ and $\ell\in L^2(h)$, we define:
\[
I_r(\ell)=\inv{2\pi}\int_{[-\pi, \pi]^2} \frac{|\ell(x+y) -
  \ell(x)|^2}{|y|^{1+2r}}\, dx dy,
\]
where we set $\ell(z)=\ell( z  -2\pi)$ for $z\in (\pi, 2\pi]$ and $\ell(z)=\ell(z+2\pi)$ for $z \in [-2\pi,-\pi)$. 
\begin{lem} \label{lem:I_r_l}
 Let $r \in (0,1)$ and $\ell\in L^2(h)$. Then we have:
 \begin{equation}
   \label{eq:majo-Ir}
c_r \{\ell\}_{2,r}^2 \leq  I_r(\ell)  \leq
C_r \{\ell\}_{2,r}^2. 
\end{equation}
 
\end{lem}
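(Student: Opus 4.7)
The plan is to diagonalize $I_r(\ell)$ via Fourier series, reducing it to a weighted sum over Fourier modes that is manifestly comparable to $\{\ell\}_{2,r}^{2}$. This is the standard Gagliardo-type characterization of fractional Sobolev seminorms, adapted to the periodic setting.

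First, I would apply Parseval on the inner $x$-integral. Write $\ell(x)=\sum_{k\in\Z}a_k e^{ikx}$. Thanks to the periodic extension of $\ell$ prescribed in the definition of $I_r$, for each fixed $y\in[-\pi,\pi]$ the translate $x\mapsto\ell(x+y)$ belongs to $L^{2}(h)$ with Fourier coefficients $(a_k e^{iky})_{k\in\Z}$. Parseval's identity with respect to $h=\frac{1}{2\pi}\ind_{[-\pi,\pi]}$ then gives
\[
\frac{1}{2\pi}\int_{-\pi}^{\pi}|\ell(x+y)-\ell(x)|^{2}\,dx
=\sum_{k\in\Z}|a_k|^{2}|e^{iky}-1|^{2}
=4\sum_{k\in\Z}|a_k|^{2}\sin^{2}(ky/2).
\]
A Tonelli exchange, valid because all integrands are non-negative, then yields $I_r(\ell)=\sum_{k\in\Z}|a_k|^{2}J_k$ with $J_k:=4\int_{-\pi}^{\pi}\sin^{2}(ky/2)/|y|^{1+2r}\,dy$.

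For $k=0$ clearly $J_0=0$, which matches the vanishing of $|k|^{2r}$ at $k=0$. For $k\neq 0$, the substitution $u=|k|y$, combined with the evenness of the integrand in $y$, extracts a factor $|k|^{2r}$ and gives $J_k=4|k|^{2r}\Phi(|k|\pi)$, where $\Phi(M):=\int_{-M}^{M}\sin^{2}(u/2)|u|^{-1-2r}\,du$. The function $\Phi$ is non-decreasing, satisfies $\Phi(\pi)>0$, and $\Phi(\infty)<\infty$ because the hypothesis $r\in(0,1)$ makes the integrand integrable both near $0$ (where $\sin^{2}(u/2)\sim u^{2}/4$ gives behavior $\sim|u|^{1-2r}$) and near infinity (decay like $|u|^{-1-2r}$). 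Hence $4\Phi(\pi)|k|^{2r}\leq J_k\leq 4\Phi(\infty)|k|^{2r}$ uniformly in $|k|\geq 1$, and summing over $k\neq 0$ yields \reff{eq:majo-Ir} with $c_r=4\Phi(\pi)$ and $C_r=4\Phi(\infty)$.

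The only genuinely delicate ingredient is the Parseval step for the translate: it relies on the fact that the periodization chosen in the definition of $I_r$ is exactly what is needed to make $x\mapsto\ell(x+y)$ a $2\pi$-periodic $L^{2}$-function whose Fourier coefficients are the phases $a_k e^{iky}$. Once this identification is made, the remainder of the argument is routine scalar analysis.
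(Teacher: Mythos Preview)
Your proof is correct and follows essentially the same route as the paper's own argument: Fourier-diagonalize the inner $x$-integral, substitute $u=|k|y$ to extract the factor $|k|^{2r}$, and bound the remaining integral between its value on $[-\pi,\pi]$ and on $\R$ (your $4\Phi(\pi)$ and $4\Phi(\infty)$ are exactly the paper's $c_r$ and $C_r$, since $|1-\expp{iz}|^{2}=4\sin^{2}(z/2)$). The only differences are presentational---you spell out the Parseval and Tonelli steps and the integrability at $0$ and $\infty$ more explicitly.
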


\begin{proof}

Using the Fourier
representation of $\ell$, we get:
\[
I_r(\ell)=\sum_{k\in \Z} |a_k|^{2} \int_{-\pi}^{\pi} \frac{|1-
  \expp{iky}|^2}{|y|^{1+2r}} \, dy
= \sum_{k\in \Z} |k|^{2r} |a_k|^{2}
\int_{- |k|\pi}^{ |k| \pi} \frac{|1-
  \expp{iz}|^2}{|z|^{1+2r}} \, dz.
\]
For  $r\in (0,1)$ and $k\in \Z^*$, we have
\[
0<c_r:=\int_{- \pi}^{ \pi} \frac{|1-
  \expp{iz}|^2}{|z|^{1+2r}} \, dz\leq \int_{-|k|\pi}^{|k|\pi} \frac{|1-
  \expp{iz}|^2}{|z|^{1+2r}} \, dz \leq \int_\R \frac{|1-
  \expp{iz}|^2}{|z|^{1+2r}} \, dz=: C_r<+\infty.
\]
This yields \reff{eq:majo-Ir}.

\end{proof}

\subsubsection*{First step :  $r\in     (1/2,1)$}
Let    $r\in     (1/2,1)$    and set $L=\cc_r K$. Let   $f=\expp{g}$ with $g \in W_r$ such that $\norm{g}_{2,r}\leq
K$.   Thanks to \reff{eq:sobolev-ineq}, we    have
$\norm{g}_\infty          \leq         \cc_r K=L$.          Using         that
$|\expp{x} -\expp{y}|\leq \expp{L}|x-y|$ for $x,y\in [-L,L]$, we deduce
that:
\begin{equation}
   \label{eq:I(f)}
I_r(f)=I_r(\expp{g})\leq  \expp{2 L} I_r(g)
\quad\text{and}\quad
\norm{f}^2_{L^2(h)}\leq  \expp{2 L}.
\end{equation}
Using \reff{eq:majo-Ir} twice, we get:
\[
\norm{f}_{2,r}^2
\leq  \expp{2L} \left(1+ \frac{C_r}{c_r}\{g\}_{2,r}^2 \right)
\leq  \expp{2\cc_r K} \left(1+ \frac{C_r}{c_r}K^2 \right).
\]
Which proves the Lemma for $r\in (1/2,1)$.

\subsubsection*{Second step :  $r\in    \N^*$}
Let $r\in \N^*$.  For $\ell\in W_r$,  the $r$-th
derivative of $\ell$, say $\ell^{(r)}$,  exists in $L^2(h)$
and:
\[
\{\ell\}_{2,r}^2=  \norm{\ell^{(r)}}^2_{L^2(h)}
\quad\text{as well as}\quad
\norm{\ell}_{2,r}^2= \norm{\ell}^2_{L^2(h)}+ \norm{\ell^{(r)}}^2_{L^2(h)}.
\]
According to \reff{eq:sobolev-ineq}, we also get that for all $p\in \N$ with
$p<r$ we have $\norm{\ell^{(p)}}_\infty \leq \cc_{r-p}
\{\ell^{(r)}\}_{2,r}\leq \cc_{1}
\{\ell^{(r)}\}_{2,r}$.

Set $L=\cc_r K$. Let   $f=\expp{g}$ with $\norm{g}_{2,r}\leq
K$.  We have $\norm{g^{(p)}}_\infty \leq  \cc_1 K$ for all integer
$p<r$.   According  to  Leibniz's rule, we get that $f^{(r)}=g^{(r)} f +
P_r(g^{(1)}, \ldots, g^{(r-1)})f$, where $P_r$ is a polynomial function
of maximal degree $r$ such
that:
\begin{equation}
   \label{eq:maxPp}
\max_{x_1, \ldots, x_{r-1}\in [-\cc_{1}K, \cc_{1}K]} |P_r(x_1,
\ldots, x_{r-1})| \leq C_{r,1} K^r. 
\end{equation} 
for some finite constant $C_{r,1}$. We deduce that:
\[
\norm{f^{(r)}}_{L^2(h)}\leq  \expp{L} \norm{g^{(r)}}_{L^2(h)}
+\expp{L} C_{r,1} K^r.
\]
Then use that $\norm{f}_{L^2(h)}\leq  \expp{L}$ to get the Lemma for
$r\in \N^*$. 

\subsubsection*{Third step :  $r>1$, $r \not\in    \N^*$}
Let $r>1$ such that $r\not\in \N^*$. Set $p=\lfloor r \rfloor\in \N^*$ the
integer part of $r$ and $s=r-p\in (0,1)$. 
For $\ell\in W_r$,  the $p$-th
derivative of $\ell$, say $\ell^{(p)}$,  exists in $L^2(h)$
and:
\begin{equation}
   \label{eq:lr>1}
\{\ell\}_{2,r}^2=  \{\ell^{(p)}\}^2_{2,s}
\quad\text{as well as}\quad
\norm{\ell}_{2,r}^2= \norm{\ell}^2_{L^2(h)}+  \{\ell^{(p)}\}^2_{2,s}.
\end{equation}
Thanks to \reff{eq:majo-Ir} (twice) and the triangle inequality, we  have for all measurable function $t$:
\begin{equation}
   \label{eq:majo-lf}
c_s \{\ell t\}_{2,s}^2
\leq  I_s(\ell t) 
\leq  \norm{t}_\infty ^2 I_s(\ell) +J_s(\ell,t)
\leq   \norm{t}_\infty ^2 C_s\{\ell\}_{2,s}^2 +J_s(\ell,t), 
\end{equation}
with 
\[
J_s(\ell,t)= \inv{2\pi} \int_{[-\pi, \pi]^2} \ell(x)^2\, \frac{|t(x+y) -
  t(x)|^2}{|y|^{1+2s}}\, dx dy.
\]

Let $K>0$ and set $L=\cc_r K$. Let   $f=\expp{g}$ with $g \in W_r$ such that $\norm{g}_{2,r}\leq
K$. Following the proof of Lemma \ref{lem:I_r_l}, we first give an upper bound of $J_s(\ell,f)$ in this context under
the only condition that $\ell\in L^2(h)$. 
 Using         that
$|\expp{x} -\expp{y}|\leq \expp{L}|x-y|$ for $x,y\in [-L,L]$, we deduce
that:
\[
\int_{-\pi}^\pi\frac{|f(x+y) -
  f(x)|^2}{|y|^{1+2s}}\,  dy
\leq  \expp{2L} \int_{-\pi}^\pi\frac{|g(x+y) -
  g(x)|^2}{|y|^{1+2s}}\,  dy.
\]
Since a.e. $g(x)=\sum_{k\in \Z} a_k \expp{ikx}$, we deduce that:
\[
J_s(\ell, f) \leq  \frac{\expp{2L}}{2\pi}
\int_{-\pi}^\pi  dx\, 
\ell(x)^2\, \sum_{k,j\in \Z} |a_k| |a_j|
\int_{-\pi}^\pi\frac{|(1-\expp{iky})(1-\expp{-ijy})|}{|y|^{1+2s}}\,  dy.
\]
Let $\varepsilon\in (0,1/2)$ such that $s+\varepsilon\leq 1$. Since
$|1-\expp{ix}|\leq  2 |x|^{s+\varepsilon}$ for all $x\in \R$, we deduce
that:
\[
\int_{-\pi}^\pi\frac{|(1-\expp{iky})(1-\expp{-ijy})|}{|y|^{1+2s}}\,  dy
\leq C_{2,\varepsilon} |k|^{s+\varepsilon} |j|^{s+\varepsilon},
\]
for some constant $C_{2,\varepsilon}$ depending only on $\varepsilon$. 
Using  Cauchy-Schwarz inequality and the fact that
$r-s-\varepsilon>1/2$, we get: 
\[
\sum_{k\in \Z}  |k|^{s+\varepsilon} |a_k|
\leq  \cc_{r-s-\varepsilon}\{g\}_{2,r}.
\]
We deduce that:
\begin{equation}
   \label{eq:Jpf}
J_s(\ell, f) \leq  \expp{2L} \norm{\ell}^2_{L^2(h)} C_{2,\varepsilon}
\cc_{r-s-\varepsilon}^2 \{g\}_{2,r}^2.
\end{equation}
According  to  Leibniz's rule, we get that $f^{(p)}= \ell f+g^{(p)} f $
with $\ell=P_p(g^{(1)}, \ldots, g^{(p-1)})$. 
We get:
\begin{equation}
   \label{eq:Psf}
c_s \{\ell f\}^2_{2,s} 
\leq \norm{f}_\infty ^2 C_s\{\ell\}_{2,s}^2
+J_s(\ell,f)
\leq \expp{2L} C_s\{f\}_{2,s}^2
+ \expp{2L} \norm{\ell}^2_{L^2(h) } C_{2,\varepsilon}
\cc_{r-s-\varepsilon}^2 \{g\}_{2,r}^2,
\end{equation}
where we used   \reff{eq:majo-lf} for the first inequality and
\reff{eq:Jpf} for the latter. Then use \reff{eq:maxPp} with $r$ replaced
by $p$ to get that  
$\norm{\ell}_{L^2(h) }\leq  \norm{\ell}_\infty \leq  C_{p,1}K^p$.  
Notice also that:
\[
 \{f\}^2_{2,s}
\leq  \expp {2L}  \frac{C_s}{c_s} \{g\}^2_{2,s},
\]
using 
\reff{eq:majo-Ir} twice and  \reff{eq:I(f)} (with $s$ instead of $r$). 
We deduce that $ \{\ell f\}_{2,s} $ is bounded by a constant depending
only on $K$, $r$ and $\varepsilon$. 

The upper bound of $\{g^{(p)}f\}^2_{2,s} $ is similar. Using
\reff{eq:majo-lf} and \reff{eq:Jpf}, we get:
\[
c_s\{g^{(p)}f\}^2_{2,s} 
\leq  \norm{f}_\infty ^2 I_s (g^{(p)}) + 
J_s(g^{(p)}, f)
\leq  \expp{2L} C_s\{g^{(p)}\}^2_{2,s} + 
\expp{2L} \norm{g^{(p)}}^2_{L^2(h)} C_{2,\varepsilon}
\cc_{r-s-\varepsilon}^2 \{g\}_{2,r}^2.
\]
We deduce that  $ \{g^{(p)} f\}_{2,s} $, and thus  $f^{(p)}$, is bounded
by a constant depending only on $K$, $r$ and $\varepsilon$.
 Then use \reff{eq:lr>1} and that $\norm{f}_{L^2(h)}\leq \norm{f}_\infty \leq
\expp{L}$ to get the Lemma for $r>1$ and $r\not\in \N$. This concludes
the proof.

\bibliographystyle{abbrv}
\bibliography{aggreg_biblio}

\end{document}